\documentclass[a4paper]{amsart}

\usepackage{amsthm}
\usepackage{amsmath}
\usepackage{amsfonts}
\usepackage{amssymb}
\usepackage{amsopn}
\usepackage{upgreek}
\usepackage{amscd}
\usepackage{graphicx}
\usepackage{enumerate}
\usepackage[all]{xy}

\theoremstyle{plain}
\newtheorem{thm}{Theorem}[section]

\newtheorem{prop}[thm]{Proposition}
\newtheorem{wniosek}[thm]{Corollary}
\newtheorem{lemma}[thm]{Lemma}

\theoremstyle{definition}
\newtheorem{dfn}[thm]{Definition}

\theoremstyle{remark}
\newtheorem{remark}[thm]{Remark}
\newtheorem{question}[thm]{Question}

\hyphenation{re-pre-sen-ta-bi-li-ty}
\hyphenation{non-pro-re-pre-sen-ta-bi-li-ty}

\renewcommand{\geq}{\geqslant}

\DeclareMathOperator{\fhi}{\varphi}
\DeclareMathOperator{\Hom}{\mathrm{Hom}}
\DeclareMathOperator{\Aut}{\mathrm{Aut}}

\DeclareMathOperator{\Set}{\mathrm{Set}}
\DeclareMathOperator{\Mod}{\mathrm{Mod}}

\DeclareMathOperator{\Spec}{\mathrm{Spec}}

\DeclareMathOperator{\coker}{\mathrm{coker}}

\DeclareMathOperator{\id}{\mathrm{id}}
\DeclareMathOperator{\maks}{\mathfrak{m}}
\DeclareMathOperator{\HO}{\mathrm{H}}

\DeclareMathOperator{\res}{\mathrm{res}}
\DeclareMathOperator{\ord}{\mathrm{ord}}
\DeclareMathOperator{\ind}{\mathrm{ind}}
\DeclareMathOperator{\infl}{\mathrm{inf}}
\DeclareMathOperator{\im}{\mathrm{im}}
\DeclareMathOperator{\transgresja}{\mathrm{tg}}
\DeclareMathOperator{\Der}{\mathrm{Der}}
\DeclareMathOperator{\kate}{\mathcal{C}}
\DeclareMathOperator{\katez}{\hat{\mathcal{C}}}

\DeclareMathOperator{\der}{\Uptheta}
\DeclareMathOperator{\derq}{\Uptheta^{\sharp}}
\DeclareMathOperator{\derj}{\Uptheta_1}
\DeclareMathOperator{\psl}{\![\![\!}
\DeclareMathOperator{\psr}{\!]\!]}

\newlength{\myVSpace}
\setlength{\myVSpace}{1ex}

\addtolength{\myVSpace}{7mm}

\DeclareMathOperator{\Z}{\mathbf{Z}}

\frenchspacing
\author{Jakub Byszewski}
\title{Deformation functors of local actions}

\address{Instytut Matematyki Uniwersytetu Jagiello\'nskiego\\ ul.\ \L ojasiewicza 6\\ 30-348 Krak\'ow\\ Poland} \email{Jakub.Byszewski@im.uj.edu.pl}
\begin{document}
\begin{abstract}
We study the behaviour of infinitesimal deformation functors of local group actions with regard to passing to subgroups and quotient groups. Inspired by the cohomological information, we conjecture the existence of a decomposition of a deformation functor of a local $G$-action as a smooth extension of a fibered product of functors related to a subgroup and a quotient group of $G$.
\end{abstract}

\maketitle

Let $H$ be a finite group acting on a smooth curve $X$ over a perfect field $k$ of positive characteristic $p>0$. For every closed point $x \in X$, we can consider the action of the stabilizer $G=H_x$ of the point $x$ on the completed local ring of the curve $R=\widehat{\mathcal{O}}_{X,x}$. Since the curve is smooth,  $R$ is a power series ring in one variable over $k$, and we obtain in this way an action $\rho$ of $G$ on $k\psl t \psr$. We call such actions local group actions (of the group $G$). Consider the infinitesimal (local) deformation functor $D_G$ which classifies lifts of $\rho$ to local artinian rings with residue field $k$. In the article, we study the behaviour of the deformation functors $D_G$ with regard to passing to subgroups and quotient groups of $G$. 

Let $N$ be a normal subgroup of $G$. In sections \ref{alians} and \ref{misja} we define a restriction map $$\res \colon D_G \to D_N^{G/N}$$ and an induction map $$\ind \colon D_G \to D_{G/N}.$$ The tangent map to $\res$ coincides with the familiar restriction map in group cohomology $\res \colon \HO^1(G,\der)\to \HO^1(N,\der)^{G/N}.$ A similar cohomological description for the tangent map to $\ind$ is given in Theorem $\ref{przebaczenie}$. The maps $\res$ and $\ind$ provide a ``lift'' of the known cohomological operations to the level of local deformation functors and can be used to study the relation between the functors $D_G$, $D_N$ and $D_{G/N}$. An easy case is the following one: if the order of the group $G/N$ is prime to $p$ and the functor $D_N$ is pro-representable, then the functor $D_{G/N}$ vanishes and the map $D_G \to D_N^{G/N}$ is an isomorphism (Theorem \ref{pyszczek}). 

Similar questions can also be investigated for the global deformation functor $D_{X,G}$ of a curve $X$ with an action of a group $G$. In the global case, $D_{X,G}$ is pro-representable and the map $D_{X,G} \to D_{X,N}^{G/N}$ is an isomorphism under rather weak assumptions (e.g., $\mathrm{genus}(X) \geq 2$ suffices, cf. \cite[p. 1909]{Maugeais}).

The general situation for local deformation functors is much more complicated, and both functors $D_N$ and $D_{G/N}$ play a r\^ole. We pose the following question: Does there exist a pro-representable functor $F$ such that the map $$(\res, \ind) \colon D_G \to D_N^{G/N} \times_F D_{G/N}$$ is smooth? As first evidence, we see that this is true at the cohomological level (cf. Corollary \ref{Jugendstill}). We approach the question as follows: we consider the morphism $$(\res, \ind) \colon D_G \to D_N^{G/N} \times D_{G/N}$$ and study the deformation theory \emph{of this morphism}. In our main result (Theorem \ref{szczezl}), we describe -- under additional assumptions of pro-representability -- a complete obstruction space for this morphism. The construction of this obstruction space is cohomological. In the light of Proposition \ref{wiarygodnosc}, the result provides evidence for a positive answer to the question.

The interrelationship between establishing pro-representability and computing versal deformation rings was already seen in \cite{BC} (cf.\ particularly Remark 3.3 of loc.\ cit.) and was rather a surprise. For a longer discussion of pro-representability of local deformation functors, see the end of section 1.

A positive answer to the posed question would enable an explicit computation of the deformation ring of $D_G$ in terms of the deformation rings of $D_N$ and $D_{G/N}$ (cf. Remark \ref{pazdziernik}). It is conceivable that these ``d\'evissage'' techniques can contribute to an inductive solution to the problem of lifting group actions on curves to characteristic zero as for example in the problem of Oort (cf.\ e.g.\ \cite{CGH}), but at present we do not know of such an application. One of the difficulties is certainly that the knowledge of the complete universal deformation ring is an overkill with respect to the mere question of the existence of a lift to characteristic zero (for which it is enough to know only the characteristic of the versal deformation ring). 

\begin{subsection}*{Acknowledgements}
The work has been done at the University of Utrecht with the financial support of the Netherlands Organisation of Scientific Research under the VIDI project no.\ 52307314 of Gunther Cornelissen and at the Universit\'e de Versailles/Universit\'e Paris-Sud 11 with the financial support of the European Community under the Arithmetic Algebraic Geometry programme.

I'm most grateful to Gunther Cornelissen and Ariane M\'ezard for the discussions, their encouragement and interest in this work. It was Ariane M\'ezard who first asked me the question about the relation between the functors $D_G$, $D_N$ and $D_{G/N}$. I would also like to thank an anonymous referee for helpful comments.
\end{subsection}

\begin{section}{Introduction}

Let $k$ be a perfect field with $\mathrm{char}(k) = p > 0$ and let $G$ be a finite group.

\begin{dfn} A \emph{local} $G$-\emph{action}\index{local $G$-action} is an injective homomorphism $\rho \colon G \to \Aut_k k\psl t \psr$. \end{dfn}

We will study infinitesimal deformation functors of local $G$-actions. These functors are defined on a suitable category of artinian rings. Let $W(k)$ be the ring of ($p$-)Witt vectors over the field $k$ and let $\mathcal{C}$ be the category of local artinian $W(k)$-algebras with residue field $k$ and local morphisms of $W(k)$-algebras. \begin{dfn} A \emph{lift} of $\rho$ to an object $A$ of $\kate$ is a homomorphism $\rho_A \colon G \to \mathrm{Aut}_A A\psl t\psr$ which reduces to $\rho$ modulo $\mathfrak{m}_A$.
Two lifts $\rho_A$, $\rho'_A$ are equivalent if they are conjugate by an element $\chi \in \ker \left( \Aut_A A\psl t \psr \rightarrow \Aut_k k\psl t \psr\right)$, i.e., if for every $g\in G$ we have $\rho'_A(g) = \chi \rho_A(g) \chi^{-1}$. The infinitesimal deformation functor of $\rho$ is a functor $$D_{\rho} \colon \mathcal{C} \to \mathbf{Sets}$$ which maps $A$ to the set of equivalence classes of lifts of $\rho$ to $A$. By abuse of notation, we usually write $D_{G}$ for $D_{\rho}$.\end{dfn}

We use a notational convention typical for deformation theory: whenever we denote an element of a local ring $A$ by $\varepsilon$, we tacitly assume that  $\mathfrak{m}_A \varepsilon=0$. Furthermore, $k[\varepsilon]$ denotes the ring $k[w]/w^2$. A surjection $p \colon A' \to A$ in $\kate$ is called \emph{small} if its kernel $I$ is annihilated by $\mathfrak{m}_{A'}$. In this case $I$ is a finite dimensional vector space over $k$.

Denote by $\der=\Der_k k\psl t \psr$ the module of derivations of the ring $k\psl t \psr$ with the natural action of $G$ given by $d^g=\rho(g)^{-1}d\rho(g), g \in G.$ To simplify the notation, we write $\Gamma_A =\Aut_A A\psl t \psr$ for $A$ in $\kate$. Given a map $A'\to A$, we denote by $\Gamma_{A',A}$ the kernel of the map $\Gamma_{A'} \to \Gamma_A$.

\begin{lemma}\label{potwor} Let $A' \to A$ be a small surjection in $\kate$ with kernel $I$. \begin{enumerate} \item[\textup{(i)}] The map  $m \colon \der \otimes I \to \Gamma_{A',A}$ given by $m(d\otimes\varepsilon)(x)=x+\varepsilon d(\bar{x})$ is an isomorphism. \item[\textup{(ii)}] Let $\gamma \in \Gamma_{A',A}$ and let $\gamma_g\in \Gamma_{A'}$ be such that $\overline{\gamma}_g = \rho(g)$. Then $\gamma_g \gamma \gamma_g^{-1} = \gamma^g,$ where we regard $\Gamma_{A',A}$ as a $G$-module via the map $m$ above. In particular, the subgroups $\Gamma_{A',A}$ and $\Gamma_{A',k}$ commute element-wise.\end{enumerate} \end{lemma}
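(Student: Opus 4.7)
The strategy for (i) is to parametrize $\Gamma_{A',A}$ concretely. An element $\gamma \in \Gamma_{A',A}$ is a continuous $A'$-algebra automorphism of $A'\psl t \psr$ reducing to the identity modulo $I$, so it is determined by $\eta(t) := \gamma(t) - t$, which lies in $I \cdot A'\psl t \psr$. The smallness assumption $\mathfrak{m}_{A'} I = 0$ gives a canonical identification $I \cdot A'\psl t \psr \cong I \otimes_k k\psl t \psr$, so $\eta$ may be an arbitrary element there; conversely any such $\eta$ defines an automorphism whose inverse sends $t$ to $t - \eta(t)$, all higher-order terms vanishing because $I^2 \subseteq \mathfrak{m}_{A'} I = 0$. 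Under this parametrization the group law on $\Gamma_{A',A}$ becomes addition: the expansion of
\[
(\gamma_1\gamma_2)(t) = t + \eta_2(t) + \eta_1\bigl(t + \eta_2(t)\bigr)
\]
equals $t + \eta_1(t) + \eta_2(t)$, because every term in $\eta_1(t + \eta_2(t)) - \eta_1(t)$ contains at least two factors from $I$ and hence vanishes. It remains to observe that $d \mapsto d(t)$ is an isomorphism $\der \to k\psl t \psr$ of $k$-vector spaces, so $\der \otimes_k I \cong I \otimes_k k\psl t \psr$, and by inspection this identification coincides with $m$ composed with the parametrization of $\Gamma_{A',A}$ above.

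For (ii), the plan is a direct computation. Write $\gamma = m(d \otimes \varepsilon)$ and let $x \in A'\psl t \psr$. Setting $y = \gamma_g^{-1}(x)$ and using that $\gamma_g$ is an $A'$-algebra map, hence fixes the scalar $\varepsilon \in A'$, one obtains
\[
\gamma_g \gamma \gamma_g^{-1}(x) = \gamma_g\bigl(y + \varepsilon\, d(\bar{y})\bigr) = x + \varepsilon \cdot \gamma_g\bigl(d(\bar{y})\bigr).
\]
The key point, again relying on $\mathfrak{m}_{A'} I = 0$, is that multiplication by $\varepsilon$ factors through reduction modulo $\mathfrak{m}_{A'}$, so for any $z \in A'\psl t \psr$ one has $\varepsilon \gamma_g(z) = \varepsilon \rho(g)(\bar{z})$. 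Applying this with $z = d(\bar{y})$ and noting $\bar{y} = \rho(g)^{-1}(\bar{x})$ gives
\[
\gamma_g \gamma \gamma_g^{-1}(x) = x + \varepsilon\bigl(\rho(g) \circ d \circ \rho(g)^{-1}\bigr)(\bar{x}) = m\bigl((\rho(g) d \rho(g)^{-1}) \otimes \varepsilon\bigr)(x),
\]
which is the claim $\gamma^g$. The commutation of $\Gamma_{A',A}$ with $\Gamma_{A',k}$ is then immediate: if $\bar{\gamma}_g = 1$ we may take $\rho(g) = 1$, and the conjugation has no effect.

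I do not anticipate any real obstacle --- the whole argument is essentially bookkeeping. What does require some care is the systematic use of the small-surjection hypothesis $\mathfrak{m}_{A'} I = 0$: first to identify $I \cdot A'\psl t \psr$ canonically with $I \otimes_k k\psl t \psr$ and to kill all higher-order terms in (i), and second to replace $\gamma_g$ by its reduction $\rho(g)$ whenever it occurs multiplied by $\varepsilon$ in (ii). Both parts of the lemma hinge on exactly this observation.
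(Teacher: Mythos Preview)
The paper offers no proof beyond declaring it immediate, and your argument supplies exactly the routine verification one would expect; both parts are handled correctly. One small discrepancy worth noting: your computation in (ii) produces the derivation $\rho(g)\,d\,\rho(g)^{-1}$, whereas the paper's stated convention is $d^g=\rho(g)^{-1}d\,\rho(g)$; since your calculation is correct, this points to a convention slip (or opposite composition convention) in the paper's definition of $d^g$ rather than an error in your reasoning.
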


The proof is immediate.

We call the set $T_D=D(k[\varepsilon])$ the \emph{tangent space} to $D$. A standard calculation shows that it is bijective with the group cohomology $\HO^1(G,\der)$ via a map which associates to a class of a one-cocycle $\gamma_g$ in $\der\otimes k\varepsilon \simeq \Gamma_{k[\varepsilon],k}$ the class of a lift $\rho_A(g)=\gamma_g \rho(g)$. In particular, this set has a natural structure of a finite dimensional vector space over $k$.

Given a small surjection $A' \to A$ in $\kate$ with kernel $I$, we can consider the question whether a given element $\kappa \in D_G(A)$ lies in the image of the map $D(A') \to D(A)$. Choose a morphism $\rho \colon G \to \Aut_A A\psl t \psr$ in the class of $\kappa$. A standard construction produces a two-cocycle $\eta(g,h)=\rho^{*}(g)\rho^*(h)\rho^*(gh)^{-1}$ with values in $\Gamma_{A',A} \simeq \der \otimes I$, where $\rho^* \colon G \to \Aut_{A'} A'\psl t \psr$ is a \emph{set-theoretic} function which is a lift of $\rho$ to $A'$. This cocycle gives an obstruction to lifting $\kappa $ to $D_G(A')$. This shows that the space $\HO^2(G,\der)$ is an obstruction space to the functor $D_G$ (for a precise definition of an obstruction space in a slightly more general context, cf. Definition \ref{sentyment}).

We now briefly recall the concepts of pro-representability and versal hull (for more information, cf.\ \cite{Schlessinger}). Consider the category $\katez$ consisting of local noetherian $W(k)$-algebras with residue field $k$. The category $\kate$ is a full subcategory of $\katez$. A functor $D \colon \kate \to \Set$ is called pro-representable if there exists an object $R$ in $\katez$ such that $D$ is isomorphic to the functor $h_R=\Hom_{\katez}(R,\cdot)$. Such a ring $R$ is then unique and is called the universal deformation ring of $D$. A morphism $D \to E$ of functors $D,E \colon \kate \to \Set$ such that $D(k)$ and $E(k)$ are one-point sets is called smooth if for any surjective morphism $A'\to A$ in $\kate$ the induced map $D(A') \to D(A)\times_{E(A)} E(A')$ is surjective. A functor $D \colon \kate \to \Set$ is said to have a versal hull if there is a smooth morphism $\fhi \colon h_R \to D$ from a pro-representable functor $h_R$ which induces an isomorphism on tangent spaces. If a functor has a versal hull, it is unique (up to a \emph{nonunique} isomorphism). The ring $R$ is called a versal deformation ring. The morphism $\fhi$ is always surjective and it is injective if and only if $D$ is pro-representable. If $A'\to A$ is a small surjection in $\kate$ with kernel $I$ and $D$ has a versal hull, the group $T_D \otimes I$ acts transitively on the fibers of the map $D(A') \to D(A)$. If $D$ is furthermore pro-representable than this action is free making the fibers of $D(A') \to D(A)$ into torsors under the action of $T_D \otimes I$.

It is well-known that the functors $D_G$ satisfy the conditions of Schlessinger \cite{Schlessinger}, and hence have a versal hull. For some of the results, we will need a stronger condition of pro-representability. It might seem at first that this stronger condition is unlikely, since the functors $D_G$ have very many ``infinitesimal automorphisms", whereas pro-representability in algebraic geometry is usually related to ``smallness'' of the space of infinitesimal automorphisms. Nevertheless, the condition has been established to hold in several cases. Recall that a local $G$-action $\rho$ induces on the group $G$ a decreasing filtration of higher ramification groups $G \supseteq G_1 \supseteq G_2 \supseteq \ldots$ with $$G_i=\{\sigma\in G \mid \mathrm{ord}_t(\rho(\sigma)(t)-t)>i\}, \quad i\ge 1.$$ If $G_1$ is zero, we call the action tamely ramified. This happens only if the order of $G$ is coprime with $p$. If $G_2$ is zero, we call the action \emph{weakly ramified}. It has been shown (cf. \cite{Nakajima}) that every local action coming from an action of a group on an ordinary curve is weakly ramified. Pro-representability of weakly ramified local deformation functors has been established except when $p=2$ and $G=\mathbf{Z}/2$ or $G=\mathbf{Z}/2\oplus \mathbf{Z}/2$ (cf.\ \cite{BC}), when it fails. This list of two counterexamples further reduces to only a single one (namely, $G=\Z/2$) if one restricts oneself to the associated equicharacteristic functor. (By this we mean that we restrict the deformation functor to the full subcategory of $\kate$ consisting of these artinian rings which are $k$-algebras. The equicharacteristic deformation functor can be pro-representable without the original functor being pro-representable. This is the case of the weakly ramified $G=\mathbf{Z}/2\oplus \mathbf{Z}/2$ action.) More generally, if $n$ is the smallest integer such that $G_{n+1}=0$, one sometimes says that the Hasse conductor of the action is $n$. A single case of higher conductor has also been resolved. This is when when $p=5$, $G=\mathbf{Z}/5$ and the conductor is two, with the resulting functor being pro-representable as well (cf.\ \cite{BCK}). All this seems to suggest that the assumption of pro-representability is a weak one, though admittedly the evidence is not yet conclusive.
\end{section}

\begin{section}{Restriction}\label{alians}

In this section we study the operation of restricting a local action to a subgroup. 

 Let $\rho$ be a local $G$-action and let $N$ be a normal subgroup of $G$. 
Restricting the action of the group to $N$ we obtain a restriction morphism $\res \colon D_{G} \to D_{N}$.

\begin{dfn} We define an action of $G/N$ on the functor $D_N$. The action of $G$ on the set $D_N(A)$ \begin{align*} G \times D_N(A) &\to D_N(A)\\(g,\kappa) &\mapsto g_{\ast}\kappa\end{align*} is given as follows: Choose a representative $\rho_A\colon N \to \Gamma_A$ of $\kappa \in D_N(A)$ and define $g_{\ast}\kappa$ as the class of $$g_{\ast}\rho_A(n)=r_g \rho_A(g^{-1}ng)  r_g^{-1},$$
where we write $r_g$ for a lift of $\rho(g)$ to $A$, i.e., $\overline{r_g}=\rho(g)$. \end{dfn}

\begin{prop}\label{action} The construction above induces an action of the group $G/N$ on the functor $D_N$. Deformations lying in the image of the restriction map $\res \colon D_G \to D_N$ are invariant under the action of $G/N$.
\end{prop}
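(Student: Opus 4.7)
The plan is to verify four things in order: (1) well-definedness of $g_{\ast}\kappa$ on equivalence classes and independence of the auxiliary choice of $r_g$; (2) that this gives an action of the group $G$; (3) that $N$ acts trivially, so that the action descends to $G/N$; (4) the invariance of deformations in the image of $\res$. The recurring tool will be that $\Gamma_{A,k}$ is normal in $\Gamma_A$ (it is the kernel of reduction), so that conjugation by any lift $r_g$ preserves $\Gamma_{A,k}$.

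For (1), if $r_g' = r_g \chi$ is another lift of $\rho(g)$ with $\chi\in\Gamma_{A,k}$, then a direct computation rewrites $r_g'\rho_A(g^{-1}ng) (r_g')^{-1}$ as $\beta \cdot r_g \rho_A(g^{-1}ng) r_g^{-1} \cdot \beta^{-1}$ with $\beta = r_g\chi r_g^{-1} \in \Gamma_{A,k}$, so the class is unchanged. A similar manipulation handles the case where $\rho_A$ is replaced by an equivalent representative $\chi \rho_A \chi^{-1}$ with $\chi \in \Gamma_{A,k}$: conjugating by $r_g \chi r_g^{-1}\in\Gamma_{A,k}$ gives the same class. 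Naturality in $A$ is clear because base change along a morphism $A\to B$ in $\kate$ sends a lift $r_g$ of $\rho(g)$ to a lift of $\rho(g)$ in $\Gamma_B$.

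For (2), given $g,h\in G$, one chooses $r_g$ and $r_h$ lifting $\rho(g)$ and $\rho(h)$; then $r_g r_h$ is a lift of $\rho(gh)$. Substituting yields
\[
(gh)_{\ast}\rho_A(n) = r_g r_h\, \rho_A(h^{-1}g^{-1}ng h)\, r_h^{-1} r_g^{-1} = g_{\ast}\bigl(h_{\ast}\rho_A\bigr)(n),
\]
and, taking $r_e = 1$, the identity element acts trivially. For (3), if $n_0\in N$, take $r_{n_0} = \rho_A(n_0)$, which is a valid lift since $\rho_A$ reduces to $\rho$ mod $\mathfrak{m}_A$; because $\rho_A|_N$ is a group homomorphism, $\rho_A(n_0)\rho_A(n_0^{-1}n n_0)\rho_A(n_0)^{-1} = \rho_A(n)$, so $(n_0)_{\ast}\kappa = \kappa$ on the nose (not merely up to equivalence). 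Hence the $G$-action factors through $G/N$.

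For (4), suppose $\kappa = \res \lambda$ with $\lambda$ represented by $\rho_A^G\colon G\to\Gamma_A$. Choose the representative $\rho_A = \rho_A^G|_N$ of $\kappa$, and take $r_g = \rho_A^G(g)$. Since $\rho_A^G$ is a homomorphism,
\[
g_{\ast}\rho_A(n) = \rho_A^G(g)\,\rho_A^G(g^{-1}n g)\,\rho_A^G(g)^{-1} = \rho_A^G(n) = \rho_A(n),
\]
so $g_{\ast}\kappa = \kappa$. The main obstacle is purely one of careful bookkeeping in step (1); once that is settled, the remaining items are essentially one-line verifications thanks to the ability to make judicious choices of lifts.
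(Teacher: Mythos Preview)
Your proof is correct and follows the same approach as the paper, which simply declares the verifications straightforward and illustrates with the identity $g_*h_* = (gh)_*$. The only cosmetic difference is that in step~(2) you exploit the freedom (already established in~(1)) to take $r_{gh} = r_g r_h$ and obtain equality on the nose, whereas the paper keeps an arbitrary $r_{gh}$ and exhibits the conjugating element $\chi_2 = r_g r_h r_{gh}^{-1} \in \Gamma_{A,k}$; both are equally valid.
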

\begin{proof} All the properties are straightforward to verify. As an example, we show that $g_{\ast}h_{\ast}=(gh)_{\ast}$ for $g,h\in G$. 
Indeed, we have \begin{align*} g_*(h_*\rho_{A})(n)&=r_g r_h\rho_{A}(h^{-1}g^{-1}ngh)r_h^{-1}r_g^{-1}\\ &=\chi_2\left((gh)_*\rho_{A}(n)\right)\chi_2^{-1} \end{align*}
with $$\chi_2=r_g r_h r_{gh}^{-1} \in \Gamma_{A,k}.\qedhere$$
\end{proof}

\begin{remark}\label{uszczelka} The action of $G/N$ on $D_N$ has the following global analogue. Let $X$ be a projective smooth geometrically connected curve over $k$ with a faithful action of a group $G$ and let $N$ be a normal subgroup of $G$. As in the local case, one can consider the deformation functor $D_{X,N}$ of the pair $(X,N)$. A deformation of $(X,N)$ to $A$ is a triple $(X_A,\rho_A,i)$, where $X_A \to \Spec A$ is flat, $\rho_A \colon N \to \Aut_A X_A$ ia a lift of $\rho$ and $i \colon X \to X_A$ is an $N$-equivariant map inducing an isomorphism $X \to X_A \otimes_A k$. There is an action of the group $G/N$ on the functor $D_{X,N}$, defined as follows: 
An element $g \in G$ acts on $D_{X,N}(A)$ by mapping the triple $$\alpha=\big(X_A, N \xrightarrow{\rho_A} \Aut_A(X_A), X \xrightarrow{i} X_A\big)$$ to $$g_{*}\alpha=\big(X_A, N \xrightarrow{i_g} N \xrightarrow{\rho_A} \Aut_A(X_A), X \xrightarrow{\rho(g)} X \xrightarrow{i} X_A\big),$$ where $i_g$ is the inner conjugation $i_g(h)=ghg^{-1}$. Restricting the group action induces a map $\res \colon D_{X,G} \to D_{X,N}^{G/N}$.
\end{remark}

\begin{prop}\label{tanres} The action of $G/N$ on the tangent space $T_{D_N}=\HO^1(N,\der)$ corresponds to the standard action as defined in group cohomology. The restriction map induces on tangent spaces the map $$\res_{k[\varepsilon]} \colon T_{D_G} \to T_{D_N}^{G/N},$$ which is equal to the restriction morphism $\HO^1(G,\der) \to
\HO^1(N,\der)^{G/N}$ in group cohomology.\end{prop}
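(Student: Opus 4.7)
The plan is to unwind both assertions through the explicit description of the tangent space bijection $T_{D_N} \simeq \HO^1(N,\der)$. Recall that a class in $T_{D_N}$ is represented by a lift $\rho_{k[\varepsilon]}(n) = \gamma_n \rho(n)$ with $\gamma_n \in \Gamma_{k[\varepsilon],k}$, and the assignment $n \mapsto \gamma_n$, viewed via $m^{-1}$ as a function $N \to \der$, is a one-cocycle (the cocycle identity follows from multiplicativity of $\rho_{k[\varepsilon]}$ combined with Lemma \ref{potwor}(ii), which identifies the conjugation action of $\rho(n)$ on $\Gamma_{k[\varepsilon],k}$ with the $G$-module action on $\der$). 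Two lifts differ by conjugation by $\chi \in \Gamma_{k[\varepsilon],k}$, which translates into modifying the cocycle by a coboundary.

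For the restriction part, I would simply observe that a lift $\rho_{k[\varepsilon]}(g) = \gamma_g \rho(g)$ of a $G$-action, when restricted to the subgroup $N$, produces precisely the lift $\rho_{k[\varepsilon]}|_N(n) = \gamma_n \rho(n)$. Under the tangent space identification, the associated cocycle on $N$ is the literal restriction of the cocycle on $G$, which is the definition of the cohomological restriction map $\HO^1(G,\der) \to \HO^1(N,\der)$.

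For the action part, I would pick the canonical choice $r_g = \rho(g)$ as a lift of $\rho(g)$ to $k[\varepsilon]$ (using $k \hookrightarrow k[\varepsilon]$); by independence of the equivalence class this is harmless. Then for $\rho_{k[\varepsilon]}(n) = \gamma_n \rho(n)$ a representative of $\kappa$, compute
\begin{align*}
g_{\ast}\rho_{k[\varepsilon]}(n) &= \rho(g)\, \gamma_{g^{-1}ng}\, \rho(g^{-1}ng)\, \rho(g)^{-1} \\
 &= \bigl(\rho(g)\, \gamma_{g^{-1}ng}\, \rho(g)^{-1}\bigr)\, \rho(n) = \gamma_{g^{-1}ng}^{\,g}\, \rho(n),
\end{align*}
using Lemma \ref{potwor}(ii) in the last equality. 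So if $c \colon n \mapsto \gamma_n$ is the cocycle representing $\kappa$, the cocycle representing $g_{\ast}\kappa$ is $c'(n) = c(g^{-1}ng)^{g}$, which is exactly the formula for the standard action of $G/N$ on $\HO^1(N,\der)$.

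I expect no serious obstacle: the argument is really an unpacking of Lemma \ref{potwor}(ii) and the definitions. The only delicate point is bookkeeping with left vs.\ right action conventions (the action on $\der$ being $d^g = \rho(g)^{-1}d\rho(g)$ is a right action, so the cocycle identity reads $c(nm) = c(n) + c(m)^n$, and the $G/N$-action formula $c'(n) = c(g^{-1}ng)^g$ is then the correct right-action version of the familiar one). One must also briefly verify that the outputs are independent of the auxiliary choices of $\rho_{k[\varepsilon]}$ in its equivalence class and of the lift $r_g$, but both modifications produce coboundaries, hence leave the cohomology class unchanged.
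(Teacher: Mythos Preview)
Your proof is correct and follows essentially the same approach as the paper: both choose $r_g=\rho(g)$, compute $g_{\ast}\rho_{k[\varepsilon]}(n)=\rho(g)\gamma_{g^{-1}ng}\rho(g^{-1}ng)\rho(g)^{-1}=\gamma_{g^{-1}ng}^{g}\rho(n)$ via Lemma~\ref{potwor}(ii), and declare the restriction statement obvious. Your additional remarks on conventions and independence of choices are fine but not needed beyond what the paper does.
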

\begin{proof} 
Let $A=k[\varepsilon]$. We can write any element $\kappa\in D_G(A)$ as a class of a lift $\rho_A(g)=\gamma_g \rho(g)$ of $\rho$ to $A$ with $\gamma_g \in \der \cong \Gamma_{A,k}$ and $\kappa=[\gamma_g] \in \HO^1(G,\der)$. In the definiton of $g_{\ast}\kappa$ we can choose $r_g=\rho(g)$. By Lemma \ref{potwor}, we have  
\begin{eqnarray*}  g_{\ast}\rho_A(n) &=& \rho(g)\rho_A(g^{-1}ng)\rho(g)^{-1} \\
&=& \rho(g)\gamma_{g^{-1}ng}\rho(g^{-1}ng)\rho(g)^{-1}  \\ &=& \gamma_{g^{-1}ng}^g \rho(n). \end{eqnarray*} Thus on
cohomology the group action takes the form
$$g_*\big[\gamma_n\big]=\big[\gamma_{g^{-1}ng}^g\big].$$ This is the standard action (cf. \cite[p. 117]{Hochschild}). 

The fact that the restriction map on tangent spaces corresponds to the restriction map on cohomology is obvious.
\end{proof}

Restriction maps $D_G$ into $D_N^{G/N}$ -- a subfunctor of $D_N$ consisting of elements invariant under the action of $G/N$.
The information we have is already sufficient to compute universal deformation rings in some special cases.

\begin{prop}\label{purpura} If the functor $D\colon \kate \to \Set$ is pro-representable, say by a ring $R$, and a group $G$ acts on $D$, then
the functor $D^G$ is also pro-representable, and its universal ring is $$R/(\{gx-x \mid g \in G, x \in R\}).$$
\end{prop}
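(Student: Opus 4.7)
The plan is to use Yoneda's lemma to transport the $G$-action on the functor to a $G$-action on the representing ring, and then identify $G$-invariants on the functor side with maps that factor through an explicit quotient.

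First I would note that since $D \simeq h_R$, the Yoneda lemma applied to $\katez$ (in the pro-category sense) translates the given action of $G$ on $D$ into an action of $G$ on $R$ by automorphisms in $\katez$. Concretely, $g \in G$ acts on $D(A) = \Hom_{\katez}(R,A)$ by precomposition with an automorphism $g^{-1} \colon R \to R$; functoriality of the action on $D$ corresponds to the group law on $R$. Each such automorphism is local (residue field is $k$ and $g$ fixes $k$), so it preserves $\maks_R$.

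Next, for $A \in \kate$, a morphism $\fhi \colon R \to A$ is fixed by $G$ if and only if $\fhi \circ g = \fhi$ for every $g \in G$, equivalently $\fhi(gx) = \fhi(x)$ for every $g \in G$ and every $x \in R$, equivalently $\fhi$ vanishes on the (two-sided) ideal $I = (\{gx - x \mid g \in G, x \in R\})$. Hence $D^G(A) = \Hom_{\katez}(R/I, A)$ naturally in $A$, which is exactly $h_{R/I}(A)$.

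The remaining point is to check that $R/I$ lies in $\katez$, i.e.\ that it is a local noetherian $W(k)$-algebra with residue field $k$. Noetherianity is immediate since $R$ is noetherian. For locality and the residue field, observe that since each $g$ acts as a local $W(k)$-automorphism, we have $gx - x \in \maks_R$ for every $x \in R$, so $I \subseteq \maks_R$; thus $R/I$ is a local ring with maximal ideal $\maks_R/I$ and residue field $k$, and the structure map $W(k) \to R/I$ is still local. This confirms $R/I \in \katez$ and concludes $D^G \simeq h_{R/I}$.

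The only mildly subtle point is justifying the Yoneda step for pro-representable functors on $\kate$: one has to check that an endomorphism of $h_R$ as a functor on $\kate$ comes from a unique endomorphism of $R$ in $\katez$; this is standard for complete noetherian local rings, as morphisms $R \to R$ in $\katez$ are determined by their reductions modulo $\maks_R^n$ for all $n$, and $R/\maks_R^n$ lies in $\kate$. I expect no further obstacle.
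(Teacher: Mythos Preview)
Your proposal is correct and follows essentially the same approach as the paper: transport the $G$-action on $D \cong h_R$ via Yoneda to an action on $R$, then identify $D^G$ with $h_{R/I}$ where $R/I$ is the ring of co-invariants. The paper's proof is extremely terse (two sentences invoking Yoneda and the co-invariant universal property), whereas you spell out the verification that $R/I$ lies in $\katez$ and the pro-representable Yoneda step, but these are just the details behind the same argument.
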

\begin{remark} The ring $S=R/(\{gx-x \mid g \in G, x \in R\})$ can be described in categorical terms as the ring of $\emph{co-invariants}$ of the action of $G$ on $R$, i.e., a ring universal for all morphisms $R \to S$ such that for any $g \in G$ the following diagram commutes: \begin{displaymath}\xymatrix{R \ar[rd] \ar[d]^g &\\R \ar[r]&S.}\end{displaymath} It is clear that to construct the ring of co-invariants, one has to take the quotient of $R$ by the ideal generated by elements of the form $gx-x$, $g\in G$, $x \in R$.\end{remark}
\begin{proof}[Proof of Proposition \ref{purpura}]
The action on the left of the group $G$ on $D\cong
h_R$ corresponds by Yoneda's lemma to the action of $G$ on the right on
the ring $R$. The functor $D^G$ is thus
pro-representable by the ring of co-invariants of the ring $R$ by the action of
$G$.
\end{proof}

\begin{wniosek} \mbox{} \begin{enumerate} \item[\textup{(i)}] If the functor $D_N$ is pro-representable, then so is $D_N^{G/N}$. \item[\textup{(ii)}] The tangent space to the functor $D_N^{G/N}$ is $\HO^1(N,\der)^{G/N}$.\qed\end{enumerate}\end{wniosek}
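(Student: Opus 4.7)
The plan is to derive both parts as immediate consequences of the structural results just established. For part (i), I would appeal to Proposition \ref{purpura}, applied with $D = D_N$ and the acting group taken to be $G/N$. The hypotheses are already verified in the preceding material: $D_N$ is pro-representable by assumption, and Proposition \ref{action} furnishes an action of $G/N$ on $D_N$. Proposition \ref{purpura} then yields pro-representability of $D_N^{G/N}$, and as a byproduct identifies its universal ring with the ring of co-invariants of the universal ring of $D_N$ under the $G/N$-action.

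For part (ii), the key point is that the formation of $G/N$-invariants commutes with evaluation on $k[\varepsilon]$: by definition $D_N^{G/N}(A) = D_N(A)^{G/N}$ for every $A \in \kate$, and specializing to $A = k[\varepsilon]$ gives
\[
T_{D_N^{G/N}} \;=\; D_N^{G/N}(k[\varepsilon]) \;=\; D_N(k[\varepsilon])^{G/N} \;=\; T_{D_N}^{G/N}.
\]
By Proposition \ref{tanres}, the right-hand side is identified with $\HO^1(N,\der)^{G/N}$, where the action on cohomology is the standard one. This yields the stated description.

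No step here should present a genuine obstacle; the only thing worth verifying is that the $k$-vector space structure on $T_{D_N^{G/N}}$ matches the subspace structure on $\HO^1(N,\der)^{G/N}$, but this is automatic because the $G/N$-action on $\HO^1(N,\der)$ is $k$-linear (being induced by the $k$-linear action on $\der$ via $d \mapsto d^g$), so the invariant set is a $k$-subspace, and the identifications from Proposition \ref{tanres} are $k$-linear throughout.
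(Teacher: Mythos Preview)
Your proposal is correct and matches the paper's intended argument: the corollary is stated with a bare \qed, meaning it is meant to follow immediately from Proposition~\ref{action}, Proposition~\ref{tanres}, and Proposition~\ref{purpura} exactly as you describe. Your write-up simply makes explicit what the paper leaves implicit.
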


\begin{thm}\label{pyszczek} Assume that the order of $G/N$ is prime to $p$ 
and denote the versal deformation rings of functors $D_G$ and $D_N$ by
$R_G$ and $R_N$. Assume furthermore that the functor $D_N$ is pro-representable. Then
the restriction map $$\res\colon D_G \to D_N^{G/N}$$ is an isomorphism. In particular, $D_G$ is also pro-representable, and $$R_G \cong
R_N/(\{gx-x\mid g \in G, x \in R\}).$$\end{thm}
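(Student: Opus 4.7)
The plan is to proceed by induction on the length of $A\in\kate$, the base case $A=k$ being trivial, and reduce to handling a single small surjection $p\colon A' \to A$ with kernel $I$. The cohomological input is that $|G/N|$ is coprime to $p$ and $\der\otimes I$ is a $k$-vector space, so the Hochschild--Serre spectral sequence degenerates and yields isomorphisms $\res\colon \HO^q(G,\der\otimes I) \xrightarrow{\sim} \HO^q(N,\der\otimes I)^{G/N}$ for every $q\geq 0$. Taking $q=1$ and invoking Proposition \ref{tanres} gives the tangent-space isomorphism $T_{D_G}\xrightarrow{\sim} T_{D_N^{G/N}}$, while the case $q=2$ will be used to compare obstructions. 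Note also that $D_N^{G/N}$ is pro-representable by Proposition \ref{purpura} applied to $D_N$.

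For surjectivity at $A'$, take $\kappa'\in D_N^{G/N}(A')$ with reduction $\kappa = p_*\kappa'$, and by induction pick $\lambda \in D_G(A)$ with $\res(\lambda) = \kappa$. The obstruction class $o(\lambda) \in \HO^2(G,\der\otimes I)$ recalled in the introduction maps under $\res$ to the obstruction to lifting $\kappa$, which vanishes since $\kappa'$ is already a lift; the case $q=2$ of the cohomological isomorphism then forces $o(\lambda)=0$, producing some $\lambda' \in D_G(A')$ that reduces to $\lambda$. Now $\res(\lambda')$ and $\kappa'$ are two elements of $D_N^{G/N}(A')$ with the same image $\kappa$, so they differ by a unique element $\eta \in T_{D_N^{G/N}}\otimes I$, the fiber being a torsor by pro-representability. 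Pulling $\eta$ back to $\beta \in T_{D_G}\otimes I$ via the tangent isomorphism and observing that the tangent action on fibers is implemented by multiplying a representative $\rho_{A'}$ by a one-cocycle -- an operation that manifestly commutes with restriction to $N$ -- we obtain $\res(\beta\cdot \lambda') = \kappa'$, as required.

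For injectivity, suppose $\lambda_1,\lambda_2\in D_G(A')$ satisfy $\res(\lambda_1)=\res(\lambda_2)$. By induction their images in $D_G(A)$ coincide, so the existence of a versal hull for $D_G$ supplies $\beta \in T_{D_G}\otimes I$ with $\lambda_2 = \beta \cdot \lambda_1$. Applying $\res$, the element $\res(\beta)$ fixes $\res(\lambda_1)$; since the action on the fibers of $D_N^{G/N}(A') \to D_N^{G/N}(A)$ is free (by pro-representability), we get $\res(\beta)=0$, and the tangent-level isomorphism forces $\beta=0$, i.e.\ $\lambda_1=\lambda_2$. This proves that $\res$ is an isomorphism of functors, whence $D_G$ is pro-representable and the formula $R_G \cong R_N/(\{gx-x \mid g \in G, x\in R_N\})$ is immediate from Proposition \ref{purpura} applied to the $G/N$-action on $D_N$. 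The one delicate point to verify is the compatibility of the restriction morphism with the torsor actions of the tangent spaces on the fibers, which is however transparent from the cocycle-level description of these actions.
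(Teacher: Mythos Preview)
Your proof is correct and follows essentially the same approach as the paper: degeneration of the Hochschild--Serre spectral sequence gives the isomorphisms on $\HO^1$ and $\HO^2$, the $\HO^2$-isomorphism kills the obstruction so a lift exists, and the $\HO^1$-isomorphism lets one adjust the lift to hit the target. The only cosmetic difference is packaging: the paper phrases the lifting step as proving that $\res$ is smooth (hence \'etale, since it is an isomorphism on tangent spaces) and then invokes the general fact that an \'etale morphism to a pro-representable functor is an isomorphism, whereas you unwind this into separate surjectivity and injectivity arguments by induction on length---your injectivity step being exactly the place where pro-representability of $D_N^{G/N}$ (freeness of the torsor action) enters.
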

\begin{proof} We have seen in Proposition \ref{tanres} that on the tangent spaces the map $\res \colon D_G \to
D_N^{G/N}$ corresponds to the restriction map $\HO^1(G,\der) \to
\HO^1(N,\der)^{G/N}$. Since the order of $G/N$ is prime to $p$, in the Hochschild-Serre spectral sequence
$$\HO^p(G/N,\HO^q(N,\der)) \Rightarrow \HO^{n}(G,\der)$$ all elements outside the
zeroth column vanish. Thus the restriction maps $$\res \colon \HO^n(G,\der) \to \HO^n(N,\der)^{G/N}$$ are isomorphisms. For $n=1$, this shows that $\res_{k[\varepsilon]}$ is an isomorphism.

We  prove that the map $\res$ is smooth. Choose a small extension $$e \colon 0 \to I \to A' \to A \to 0$$ and
an element $$(\kappa_A,\lambda_{A'}) \in D_G(A)\times_{D_N^{G/N}(A)} D_N^{G/N}(A').$$ Let $\nu_e(\kappa_A) \in \HO^2(G,\der)\otimes I$ be the obstruction to lifting $\kappa_A$ to $A'$. Since $\res(\kappa_A)$ lifts to $A'$, we see that $\res(\nu_e(\kappa_A))=0$. The map $$\res \colon \HO^2(G,\der) \to \HO^2(N,\der)^{G/N}$$ is an isomorphism, and hence $\nu_e(\kappa_A)=0$. Thus, there is no obstruction to lifting $\kappa_A$ to $A'$ and there exists a $\kappa_{A'}\!\in
D_G(A')$ lying above $\kappa_A$. The deformation $\kappa_{A'}$
does not necessarily map to $\lambda_{A'}$, but its image $\res(\kappa_{A'})$ and $\lambda_{A'}$ do lie in the same fiber of $D_N^{G/N}(A')\to D_N^{G/N}(A)$. Since the functor $D_N^{G/N}$ is pro-representable, the fiber is a torsor under the action of $T_{D_N^{G/N}} \otimes I$. Hence we can define $\xi \in T_{D_N^{G/N}} \otimes I$ to be $$\xi=\res(\kappa_{A'})-\lambda_{A'}.$$ Let $\zeta \in T_{D_G} \otimes I$ be such that $\res(\zeta)=\xi$. Then the element $\kappa_{A'}-\zeta \in D_G(A')$ maps to $(\kappa_A,\lambda_{A'})$ by the map $$D_G(A')\to D_G(A)\times_{D_N^{G/N}(A)} D_N^{G/N}(A').$$ Hence $\res$ is \'etale, i.e., smooth and isomorphic on tangent spaces) and the claim follows from Proposition \ref{purpura}.\end{proof}

\begin{remark}\label{porzeczka} We follow the notation of Remark \ref{uszczelka}. For global deformation functors, the restriction map induces an isomorphism $\res \colon D_{X,G} \to D_{X,N}^{G/N}$ whenever $\mathrm{genus}(X) \geq 2$ (cf. \cite[p. 1909]{Maugeais}).\end{remark}

\begin{prop}
Let $\rho \colon G \to \Aut_k k\psl t \psr$ be a weakly ramified local action of a group $G$ whose order is divisible by $p$, but not divisible by $p^2$, $p \geq 3$. Then the functor $D_G$ is pro-representable by $W[\zeta_p+\zeta_p^{-1}]$ if $G=\mathbf{Z}/p$ or if $G$ is the dihedral group $D_p$. Otherwise, $D_G$ is pro-representable by $k$. 

\begin{proof} Consider the higher ramification groups $G_i$ of $G$ (cf. \cite[Ch. IV]{Serre}). Then $P=G_1$ is a normal $p$-Sylow subgroup of $G$ and hence is a cyclic subgroup of order $p$. We know by \S 5.3, Case 1 that $D_P$ is pro-representable by the ring $R_P=W[\zeta_p+\zeta_p^{-1}]$, where $\zeta_p$ is the $p$-th primitive root of unity. Thus $R_P$ is a complete discrete valuation ring. By Theorem \ref{pyszczek}, $D_G$ is pro-representable by the ring of $C$-co-invariants of $R_P$, where $C=G/P=\mathbf{Z}/m$. Since the order of $C$ is prime to $p$, $C$ has a trivial intersection with the wild inertia (cf. \cite[Corollaire 3 to Proposition IV.7]{Serre}). Thus we have the following two cases: \begin{enumerate}\item[(i)] The action of $C$ on $\mathfrak{m}_{R_P}/\mathfrak{m}_{R_P}^2$ is trivial, and thus $R_P/C=R_P$. \item[(ii)] The action of $C$ on $\mathfrak{m}_{R_P}/\mathfrak{m}_{R_P}^2$ is nontrivial, and thus $R_P/C=k$. \end{enumerate} The vector space $\mathfrak{m}_{R_P}/\mathfrak{m}_{R_P}^2$ is the tangent space $\HO^1(P,\der)$ to the functor $D_P$ and so we can distinguish between cases (i) and (ii) by a purely group cohomological calculation. Such a computation was done in \cite[\S 3.7]{Cornelissen}.
\end{proof}
 
\end{prop}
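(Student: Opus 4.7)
The plan is to apply Theorem \ref{pyszczek} to the normal subgroup $P=G_1$ and then analyze the resulting ring of coinvariants directly. First, I would identify the structure of $G$. Since $p\mid|G|$ but $p^{2}\nmid|G|$ and the action is weakly ramified, the first higher ramification group $P=G_{1}$ is cyclic of order $p$ and normal in $G$. Because the action is on $k\psl t\psr$, one has $G=G_{0}$, and the quotient $C:=G/P=G_{0}/G_{1}$ embeds into $k^{\times}$ via its action on $\maks_{k\psl t\psr}/\maks_{k\psl t\psr}^{2}$; hence $C$ is cyclic of order coprime to $p$, and by Schur--Zassenhaus $G\cong P\rtimes C$ with $C\hookrightarrow(\mathbf{Z}/p)^{\times}=\Aut(P)$. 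By Case 1 of \S 5.3, $D_{P}$ is pro-representable by $R_{P}=W[\zeta_{p}+\zeta_{p}^{-1}]$, so Theorem \ref{pyszczek} yields pro-representability of $D_{G}$ with $R_{G}$ equal to the ring of $C$-coinvariants of $R_{P}$.

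Next, I would analyze the $C$-action on $R_{P}$. The ring $R_{P}$ is a complete discrete valuation ring, totally ramified over $W$ of degree $(p-1)/2$, and $R_{P}=W[\pi]$ for every uniformizer $\pi$ (the Eisenstein relation satisfied by $\pi$ together with $\pi$-adic completeness forces this). Since $|C|$ is invertible in $R_{P}$, averaging is available: set $\pi':=\tfrac{1}{|C|}\sum_{c\in C}c(\pi)$. If the induced $C$-action on $\maks_{R_{P}}/\maks_{R_{P}}^{2}$ is trivial, then $\pi'\equiv\pi\pmod{\maks_{R_{P}}^{2}}$ is itself a $C$-invariant uniformizer; as $C$ fixes $W$ pointwise and $R_{P}=W[\pi']$, the action of $C$ on $R_{P}$ is in fact trivial, so the coinvariant ring equals $R_{P}$. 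If instead the action on $\maks_{R_{P}}/\maks_{R_{P}}^{2}$ is nontrivial, pick $c\in C$ with $c(\pi)\equiv\alpha\pi\pmod{\maks_{R_{P}}^{2}}$, $\alpha\in k^{\times}\setminus\{1\}$; then $c(\pi)-\pi=(\alpha-1)\pi+O(\pi^{2})$ is a uniformizer, so $\pi$ lies in the coinvariant ideal $J$. Because $C$ acts trivially on the residue field (it fixes $W$ and $k=W/pW\to R_{P}/\maks_{R_{P}}$ is surjective), one has $J\subseteq\maks_{R_{P}}$; combined with $\maks_{R_{P}}\subseteq J$ this gives $R_{P}/J=k$.

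Finally, I would determine which of the two cases occurs in terms of $G$. By Proposition \ref{tanres}, the $C$-action on $\maks_{R_{P}}/\maks_{R_{P}}^{2}\cong T_{D_{P}}\cong\HO^{1}(P,\der)$ coincides with the standard group-cohomological $C$-action; since the tangent space is one-dimensional over $k$, this is a single character $\chi\colon C\to k^{\times}$ depending only on the semidirect product structure $G=P\rtimes C$ and the weakly ramified normal form of $\rho$. The character computation is the main obstacle of the proof, but it is purely cohomological and is carried out in \cite[\S 3.7]{Cornelissen}: the character $\chi$ is trivial precisely when either $C=1$ (i.e.\ $G=\mathbf{Z}/p$) or $C=\mathbf{Z}/2$ acts on $P$ by inversion (i.e.\ $G=D_{p}$), and is nontrivial in all other cases. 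Combining this with the dichotomy of the previous paragraph yields the claimed description of $R_{G}$.
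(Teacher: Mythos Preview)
Your proof is correct and follows essentially the same route as the paper: identify $P=G_1\cong\mathbf{Z}/p$, invoke the known pro-representability of $D_P$ by the DVR $R_P=W[\zeta_p+\zeta_p^{-1}]$, apply Theorem~\ref{pyszczek} to reduce to computing the $C$-coinvariants of $R_P$, and then argue that this coinvariant ring is either $R_P$ or $k$ according to whether the $C$-action on the one-dimensional cotangent space $\mathfrak{m}_{R_P}/\mathfrak{m}_{R_P}^2\cong\HO^1(P,\der)$ is trivial or not, with the final character computation outsourced to \cite[\S 3.7]{Cornelissen}. Your write-up is somewhat more explicit than the paper's in two places---you spell out the averaging argument that produces a $C$-invariant uniformizer in case~(i), and you justify case~(ii) by showing directly that $c(\pi)-\pi$ is a uniformizer---but these are elaborations of exactly the same dichotomy the paper states.
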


\end{section}

\begin{section}{Induction}\label{misja}
\begin{subsection}{Generalities}
Let $\rho \colon G \to \Gamma_k$ be a local $G$-action, and let $N$ be a normal subgroup of $G$. Consider the ring of $N$-invariants $k\psl t \psr^N$. 
Every automorphism $\rho(g)$ of $k\psl t \psr$ restricts to a map on $k\psl t \psr^N$. The map clearly depends only on the class $\overline{g}$, and  we denote it by $\rho(\overline{g})^{\sharp}$. We show below (Lemma \ref{prozelityzm}) that $k\psl t \psr^N=k\psl y \psr$ and hence $\rho^{\sharp} \colon G/N \to \Aut_k k\psl y\psr = \Gamma_k^{\sharp}$ is a local $G/N$-action. This allows us to consider the deformation functor $D_{G/N}$ of the local action $\rho^{\sharp}$. We will construct an induction map $\ind\colon D_G \to D_{G/N}$.

\begin{lemma}\label{prozelityzm} Let $\rho_A \colon G
\rightarrow \Gamma_A$ be a lift of $\rho$ to an object $A$ of $\kate$. Then
$$A\psl t\psr^G = A\psl y\psr ,$$ where $y=\prod_{g \in G}\rho_A(g)(t).$ \end{lemma}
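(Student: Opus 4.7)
The plan is a two-step argument: the inclusion $A\psl y\psr\subseteq A\psl t\psr^G$ is automatic, since the $G$-action permutes the factors of $y=\prod_{g\in G}\rho_A(g)(t)$, so $y$ (and hence every element of $A\psl y\psr$) is $G$-invariant. For the reverse inclusion I would induct on the length of the Artinian ring $A$, with base case $A=k$ and inductive step via a small surjection $A\to B$.

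For the base case $A=k$: each $\rho(g)\in\Aut_k k\psl t\psr$ sends $t$ to a uniformizer, so $\bar y:=\prod_{g\in G}\rho(g)(t)$ has $t$-valuation $|G|$. The fixed ring $k\psl t\psr^G$ is a complete discrete valuation ring with residue field $k$ (since $G$ acts trivially on residue fields), and the extension $k\psl t\psr/k\psl t\psr^G$ is totally ramified Galois of degree $|G|$. Any element of $t$-valuation $|G|$ in $k\psl t\psr^G$ therefore generates its maximal ideal, yielding $k\psl t\psr^G=k\psl\bar y\psr$.

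For the inductive step, choose a small surjection $A\to B$ with kernel $I$ (which exists whenever $A\neq k$, by picking $x$ in the last nonzero power of $\maks_A$) and assume the lemma for $B$. Given $f\in A\psl t\psr^G$, its reduction satisfies $\bar f=h(y_B)$ for some $h(Y)\in B\psl Y\psr$, where $y_B$ is the image of $y$; lift the coefficients to $\tilde h(Y)\in A\psl Y\psr$ and set $g:=f-\tilde h(y)\in A\psl t\psr^G$. Then $g$ reduces to zero modulo $I$, so $g\in A\psl t\psr^G\cap I\psl t\psr=I\psl t\psr^G$, and it suffices to show $I\psl t\psr^G\subseteq I\psl y\psr$.

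To identify $I\psl t\psr^G$ with $I\psl y\psr$: since $\maks_A\cdot I=0$, the module $I$ is a finite-dimensional $k$-vector space and $I\psl t\psr\cong I\otimes_k k\psl t\psr$ as $G$-modules, with trivial action on the first factor. Taking invariants gives $I\psl t\psr^G\cong I\otimes_k k\psl t\psr^G=I\otimes_k k\psl\bar y\psr$ by the base case. Moreover, $\maks_A$ annihilates $I\psl t\psr$, so multiplication by $y$ on $I\psl t\psr$ factors through $A\psl t\psr/\maks_A A\psl t\psr=k\psl t\psr$ and coincides with multiplication by $\bar y$; this identifies $I\otimes_k k\psl\bar y\psr$ with $I\psl y\psr$ as subsets of $I\psl t\psr$. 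Hence $g\in I\psl y\psr\subseteq A\psl y\psr$ and $f=\tilde h(y)+g\in A\psl y\psr$, completing the induction. The main technical obstacle is the bookkeeping in this last paragraph: one must verify carefully that forming $G$-invariants commutes with the tensor product $I\otimes_k(-)$ (clear because $I$ is $k$-flat with trivial $G$-action) and that the operators ``multiplication by $y$'' and ``multiplication by $\bar y$'' really do agree on $I\psl t\psr$, which is exactly what the small-surjection hypothesis $\maks_A\cdot I=0$ provides.
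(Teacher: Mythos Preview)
Your argument is correct and follows essentially the same route as the paper: the trivial inclusion $A\psl y\psr\subseteq A\psl t\psr^G$, then induction on the length of $A$ via a small surjection, reducing the remainder term to the base case $A=k$ via the identification $I\psl t\psr\cong I\otimes_k k\psl t\psr$. The only cosmetic differences are that the paper restricts to a \emph{principal} small surjection (kernel $k\varepsilon$ rather than a general $I$) and handles the base case by Galois theory of the fraction field $k(\!(t)\!)$ rather than by your direct valuation argument on the integral ring; neither changes the substance of the proof.
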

\begin{proof} The proof will be by induction
on  length of $A$. 

\emph{Case $A=k$:} By Galois theory, the field of $G$-invariants of $k(\!(t)\!)$ is $k(\!(y)\!)$ (it is clearly invariant and it has the proper index.) The claim follows.

\emph{Case $A \neq k$:} In this case length of $A$ is at least $2$, and we can  find a principal surjection $\fhi \colon A \to A_0$ in
$\kate$ with kernel $\varepsilon A$ and $\maks_A \varepsilon = 0$. Furthermore, we may assume that the claim holds for $A_0$. We clearly
have $A\psl y \psr \subseteq A \psl t \psr^G$. Now, for any $f \in A
\psl t \psr^G$, the image $\fhi_*\! f$ of $f$ in $A_0\psl t \psr$ lies in $A_0\psl t \psr^G = A_0\psl \fhi_*\! y
\psr$. Choosing any lift $f_1$ of $\fhi_*\! f$ to  $A \psl y \psr$, we can write $f=f_1+h$ with $f_1 \in A \psl y \psr$, $h
\in \varepsilon A\psl t \psr$. Furthermore, $h$ has to be $G$-invariant, since so are both $f$ and $f_1$. We have $\varepsilon A\psl t \psr \simeq k \psl t \psr$ and
the induced action of $G$ on $k\psl t \psr$ is given by $\rho$. Since $h$ is $G$-invariant, it lies in
$k\psl \bar{y} \psr$, where $\bar{y}$ is the image of $y$ in $k \psl t \psr$. Consequently, we can write $h = \varepsilon u$ with $u \in A \psl y\psr$. This shows that $f= f_1+\varepsilon u$ lies in $A\psl y \psr$ and
finishes the proof of the inclusion $A\psl t \psr^G\subseteq A\psl y \psr$. 
\end{proof}

The induction map $D_G \colon D_{G/N}$ is constructed as follows: let $\kappa \in D_G(A)$ be a class of a lift $\rho_A \colon G \to \Aut_A A\psl t \psr$. Restricting the automorphisms to the subring $A\psl t \psr^N$ produces an action of $G/N$ on the ring $A\psl t \psr^N$. We define the local action $\rho_A^{\sharp}\colon G/N \to \Aut_A A\psl y \psr$ using the isomorphism $A\psl y \psr \simeq A \psl t \psr^N$ mapping $y$ to $\prod_{s\in N} \rho_A(s)(t)$. It is immediate to check that the equivalence class of $\rho_{A}^{\sharp}$ does not depend on the equivalence class of $\rho_A$ and that we obtain in this way a morphism of functors $D_G \to D_{G/N}$.

\begin{remark} We continue to use the notation and assumptions of Remark \ref{uszczelka}. The induction map also has a global analogue $\ind \colon D_{X,G} \to D_{X/N,G/N}$. It maps the triple $$\big(X_A, G \to \Aut_A(X_A), X \to X_A\big)$$ to $$\big(X_A/N, G/N \to \Aut_A(X_A/N), X/N \to X_A/N\big).$$ For details, see \cite{Mezard2}.
\end{remark}

\end{subsection}

\begin{subsection}{Tangent map}
In this subsection, we will compute the action of the induction map on the tangent spaces. Recall our setup: $\rho \colon G \to \Gamma_k$ is a local $G$-action and $N$ is a normal subgroup of $G$. Then we have extensions of discrete valuation rings: $$ k\psl t \psr^G \subseteq  k\psl t \psr^N \subseteq  k \psl t ]\!].$$ Lemma \ref{prozelityzm} gives $k \psl t \psr^G =k\psl y_G \psr$ and $k \psl t \psr^N=k \psl y_N \psr$, where $y_G=\prod_{g \in G} \rho(g)(t)$ and $y_N=\prod_{s \in N}\rho(s)(t)$. Recall that we write $\der = \Der_k k\psl t \psr$, and similarly we denote $\der^{\sharp}= \Der_k (k\psl t\psr^N) = \Der_k k\psl y_N\psr$. If $d\in\der^N$, then $d \rho(s) = \rho(s)d$ for $s \in N$, and hence $d(k\psl t \psr^N) \subseteq k \psl t \psr^N$. Restricting $d$ to the subring $k\psl t\psr^N$, we obtain an embedding $\der^N \hookrightarrow \der^{\sharp}$. The map takes the form $$a(t)\frac{d}{dt}\mapsto a(t)\frac{dy}{dt}\frac{d}{dy},$$ and hence is injective.
\begin{lemma} \label{rykliwy}
The maps \begin{align*} \der &\to  \mathcal{D}_{k\psl t \psr\!\slash k\psl t \psr^G}  &\derq&\to \mathcal{D}_{k\psl t \psr^N\!\!\slash k\psl t \psr^G} \\ d &\mapsto d(y_G) &d&\mapsto  d(y_G) \end{align*} are isomorphisms of $G$-modules. The maps agree with the natural embeddings $$\xymatrix{\der^N \ar@{^(->}[r] \ar[d]^{\wr} & \der^{\sharp} \ar[d]^{\wr}\\ \mathcal{D}_{k\psl t \psr/k\psl t \psr^G}\cap k\psl t \psr^N \ar@{^(->}[r] & \mathcal{D}_{k\psl t \psr^N/k\psl t \psr^G}.}$$
\end{lemma}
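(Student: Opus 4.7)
The plan is to realize both maps as multiplication by the derivative of an appropriately chosen uniformizer, and to identify the resulting principal ideal with the different via the Kähler cotangent sequence. Fix the trivialization $\der \cong k\psl t \psr \cdot \partial_t$; then $d \mapsto d(y_G)$ becomes multiplication by $y_G'(t) := \tfrac{dy_G}{dt}$. Injectivity is immediate because $y_G$ has $t$-valuation $|G|$, so $y_G'(t)$ is a nonzero element of the domain $k\psl t \psr$. The image is therefore the principal ideal $(y_G'(t)) \subseteq k\psl t \psr$.

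The main step, which I expect to be the hard (though classical) part, is the identification of $(y_G'(t))$ with the different $\mathcal{D}_{k\psl t \psr / k\psl t \psr^G}$. The extension $k\psl t \psr / k\psl y_G\psr$ is totally ramified of degree $|G|$, and the polynomial $f(X) := \prod_{g \in G}\bigl(X - \rho(g)(t)\bigr) \in k\psl y_G \psr[X]$ is Eisenstein with root $t$ (its constant term is $\pm y_G$, a uniformizer of $k\psl y_G \psr$), so $k\psl t \psr = k\psl y_G \psr[X]/(f(X))$. The classical formula for the different of a monogenic extension gives $\mathcal{D}_{k\psl t \psr / k\psl t \psr^G} = (f'(t))$. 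Separately, the conormal sequence
\begin{equation*}
\Omega_{k\psl y_G \psr/k} \otimes_{k\psl y_G \psr} k\psl t \psr \to \Omega_{k\psl t \psr /k} \to \Omega_{k\psl t \psr / k\psl y_G \psr} \to 0,
\end{equation*}
whose first map sends $dy_G \mapsto y_G'(t)\, dt$, presents $\Omega_{k\psl t \psr / k\psl y_G \psr}$ as $k\psl t \psr / (y_G'(t))$. Comparing with the monogenic presentation $\Omega_{k\psl t\psr / k\psl y_G \psr} \cong k\psl t \psr / (f'(t))$ yields $(y_G'(t)) = (f'(t)) = \mathcal{D}_{k\psl t \psr / k\psl t \psr^G}$, as required. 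Finally, $G$-equivariance follows from the short computation $d^g(y_G) = \rho(g)^{-1} d\bigl(\rho(g)(y_G)\bigr) = \rho(g)^{-1}\bigl(d(y_G)\bigr)$, using the $G$-invariance of $y_G$.

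For the second map $\derq \to \mathcal{D}_{k\psl t \psr^N / k\psl t \psr^G}$, I would apply the same argument verbatim with $G$ replaced by $G/N$, after observing that $y_G = \prod_{\bar g \in G/N} \rho^{\sharp}(\bar g)(y_N)$ in $k\psl y_N\psr$, so $y_G$ plays the role of the total-product uniformizer for the $G/N$-action on $k\psl y_N \psr$. Commutativity of the displayed square is then essentially tautological: for $d \in \der^N$ the element $d(y_G)$ lies in $k\psl t \psr^N$ because $d$ preserves the subring of $N$-invariants and $y_G$ is $N$-invariant, and it coincides with $(d|_{k\psl y_N \psr})(y_G)$ by the very definition of the embedding $\der^N \hookrightarrow \derq$; thus the two compositions agree on the common element $d(y_G) \in k\psl y_N \psr$.
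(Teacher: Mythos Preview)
Your argument is correct and follows essentially the same route as the paper, which simply cites \cite[Corollaire III.6.2]{Serre} for bijectivity and \cite[proof of Th\'eor\`eme 4.1.1]{BM} for $G$-equivariance. You have unpacked those references: the identification $(y_G'(t)) = (f'(t)) = \mathcal{D}_{k\psl t\psr/k\psl t\psr^G}$ via the conormal sequence and the monogenic different formula is precisely the content of Serre's result, and your one-line equivariance check $d^g(y_G)=\rho(g)^{-1}(d(y_G))$ is the ``easy calculation'' the paper alludes to.
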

\begin{proof}
The maps are bijective by \cite[Corollaire III.6.2]{Serre}. The maps are $G$-equivariant by an easy calculation (cf. \cite[proof of Th\'{e}or\`{e}me 4.1.1]{BM}).\end{proof}Consider the ideal $\der_1 = \mathcal{D}_{k \psl t \psr ^N\!\!\slash k\psl t \psr^G} k \psl t \psr$ as a $G$-module. Clearly $$\der_1^N= \der_1 \cap \; k \psl t \psr^N = \derq.$$ We can picture relations between these ideals as follows: \begin{align*} &\der^N \subset \der\\ &\cap \quad \quad \cap \\ \derq =&\der_1^N  \subset \der_1 \\ &\cap \quad \quad \cap \\ k&\psl t \psr^N \!\subset k\psl t \psr.\end{align*} The map $\ind\colon D_G \to D_{G/N}$ induces on  tangent spaces the map $$\ind_{k[\varepsilon]} \colon \HO^1(G,\der) \to \HO^1(G/N,\der^{\sharp}).$$

\begin{thm}\label{przebaczenie} The following diagram is commutative: 
$$\xymatrix{\HO^1(G/N,\der^N)
\ar@{^(->}[d]^{\inf} \ar[r] &\HO^1(G/N,\derq) \ar@{^(->}[d]^{\inf}\\
\HO^1(G,\der) \ar[ur]_{\ind_{k[\varepsilon]}} \ar[r]& \HO^1(G,\derj),}$$ where $\inf$ is the usual inflation map in group cohomology.\end{thm}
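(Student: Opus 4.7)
The plan is a direct cocycle-level calculation. Let $\gamma\in Z^1(G,\der)$ be a cocycle with associated first-order lift $\rho_A(g)=\hat\gamma_g\circ\rho(g)$, where $A=k[\varepsilon]$ and $\hat\gamma_g$ is the automorphism $x\mapsto x+\varepsilon\gamma_g(\bar x)$ of $A\psl t\psr$. Following the construction of $\ind$, we form $y^A:=\prod_{s\in N}\rho_A(s)(t)\in A\psl t\psr^{\rho_A|_N}$, expand to first order as $y^A=y_N+\varepsilon\delta$ for an explicit $\delta\in k\psl t\psr$, and extract the cocycle $\gamma^{\sharp}\in Z^1(G/N,\derq)$ representing $\ind_{k[\varepsilon]}[\gamma]$ by pulling $\rho_A(g)(y^A)$ back through the isomorphism $\tau\colon A\psl y\psr\simeq A\psl y^A\psr$, $y\mapsto y^A$.

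The central computation combines $\tau(a+\varepsilon b)=a(y_N)+\varepsilon[\delta\,a'(y_N)+b(y_N)]$ (the first-order Taylor formula for $\tau$) with the chain rule applied to the double expansion $y_G=F(y_N)=F(y_{Ng})$, where $F\in k\psl y\psr$ is the unique power series with $F(y_N)=y_G$ (the second equality holding by $G$-invariance of $y_G$ and the fact that $F$ has coefficients in $k$). After transporting both cocycles into the ambient ideal $\der_1\subset k\psl t\psr$ via the isomorphisms of Lemma \ref{rykliwy}, a short manipulation yields the identity
\[
\gamma_g(y_G)-\gamma^{\sharp}_{\bar g}(y_G)=\tilde\delta-\rho(g)(\tilde\delta),\qquad \tilde\delta:=\delta\cdot\frac{dy_G}{dy_N}.
\]
This is precisely the commutativity of the lower triangle: viewed in $\HO^1(G,\der_1)$, the image of $\gamma$ under the inclusion $\der\hookrightarrow\der_1$ agrees with the inflation of $\gamma^{\sharp}$, the two cocycles differing by the coboundary $d\tilde\delta$. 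The delicate point is that $\tilde\delta$ really lies in $\der_1=\mathcal{D}_{k\psl t\psr^N/k\psl t\psr^G}\cdot k\psl t\psr$, and not merely in $k\psl t\psr$. This is ensured by applying Lemma \ref{rykliwy} to $\partial/\partial y_N\in\derq$: its image under the isomorphism $\derq\simeq\mathcal{D}_{k\psl t\psr^N/k\psl t\psr^G}$ is $dy_G/dy_N$, hence $dy_G/dy_N\in\mathcal{D}_{k\psl t\psr^N/k\psl t\psr^G}$.

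The upper triangle follows by specialising the above to $\gamma=\infl(c)$ with $c\in Z^1(G/N,\der^N)$. The normalisation $c_{\bar e}=0$ forces $\gamma_s=c_{\bar s}=0$ for every $s\in N$; hence $\delta=0$, the isomorphism $\tau$ becomes the standard identification $y\leftrightarrow y_N$, and the formula for $\gamma^{\sharp}$ collapses to $\gamma^{\sharp}_{\bar g}=c_{\bar g}|_{k\psl t\psr^N}$, i.e.\ to the image of $c_{\bar g}$ in $\derq$ under the restriction embedding $\der^N\hookrightarrow\derq$.

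The main obstacle is the bookkeeping in the first two paragraphs: because $\tau$ itself depends on $\gamma$ through $\delta$, extracting the cocycle $\gamma^{\sharp}$ requires carefully separating the first-order contributions, and the cancellations producing the clean identity above rely on the double expansion $y_G=F(y_N)=F(y_{Ng})$ afforded by $G$-invariance of $y_G$.
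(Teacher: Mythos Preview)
Your approach is essentially the same as the paper's: both extract the induced cocycle $\gamma^{\sharp}$ by a first-order expansion of $y^A$ and of $\rho_A(g)(y^A)$, then show that in $\der_1$ the cocycles $\gamma_g(y_G)$ and $\gamma^{\sharp}_{\bar g}(y_G)$ differ by the coboundary of the explicit element $\tilde\delta=\delta\cdot\tfrac{dy_G}{dy_N}$ (which is exactly the paper's $-\alpha$), and verify $\tilde\delta\in\der_1$ via Lemma~\ref{rykliwy}. The only organisational differences are that the paper isolates the ``short manipulation'' as Lemma~\ref{nosorozec} and Corollary~\ref{prefektura} (for reuse in Theorem~\ref{szczezl}), and that it deduces the upper triangle from the square (functoriality of $\inf$) together with injectivity of the right-hand $\inf$, rather than by your direct specialisation $\gamma=\infl(c)$; your treatment of the upper triangle is correct but slightly redundant. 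One minor caution: under the identifications of Lemma~\ref{rykliwy} the $G$-action on $\der_1\subset k\psl t\psr$ is by $\rho(g)^{-1}$, so your $\rho(g)(\tilde\delta)$ should be read as $\tilde\delta^{\,g}=\rho(g)^{-1}(\tilde\delta)$; this does not affect the argument, since either sign convention gives a coboundary.
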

\begin{proof} The square commutes by functoriality of the maps $\inf$. Since the right most map is injective, it is enough to prove that the right hand side triangle commutes. Denote $A=k[\varepsilon]$. A lift of $\rho_A\colon G \to \Gamma_A$ corresponding to a 1-cocycle $d\colon G \to \der$ is given by $$\rho_A(g)(x)=\rho(g)(x) + \varepsilon d_g(\rho(g)(x)).$$ The image
of $[d_g]$ by the induction map is the class of a cocycle
$(d^{\sharp}_{\bar{g}})$ such that
$$\rho^{\sharp}_A(\bar{g})(u)=\rho^{\sharp}(\bar{g})(u) + \varepsilon
d_{\bar{g}}^{\sharp}\left(\rho^{\sharp}(\bar{g})(u)\right).$$ Denote $y_N^A=\prod_{s \in N} \rho_A(s)(t)$. Note that \begin{equation}\label{Gombrowicz}y_N^A=\prod_{s \in N} \rho_A(s)(t) = \prod_{s \in N} \left(f_s + \varepsilon d_s (f_s)\right)=y_N+\varepsilon y_N \sum_{s \in N} \frac{d_s(f_s)}{f_s}.\end{equation} On the one hand, we have
\begin{eqnarray}\nonumber \rho^{\sharp}_A(\bar{g})(y_N^A) &=& \rho_A(g)(y_N^A)=\prod_{s \in N} \rho_A(gs)(t) = \prod_{s \in N}
\left( f_{gs} + \varepsilon d_{gs}(f_{gs}) \right) \\ \label{Witkiewicz} &=& y_N^g
+ \varepsilon y_N^g \sum_{s \in N} \frac{d_{gs}(f_{gs})}{f_{gs}}.\end{eqnarray} On the other hand,
\begin{equation}\label{Milosz}\rho_A^{\sharp}(\bar{g})(y_N^A) = \rho^{\sharp}(\bar{g})(y_N^A) +
\varepsilon d_{\bar{g}}^{\sharp}(y_N^g),\end{equation} where
$\rho^{\sharp}(\bar{g})(y_N^A)$ denotes the power series $\rho^{\sharp}(\bar{g})(y_N)$
with $y_N^A$ substituted for $y_N$. By equation (\ref{Gombrowicz}), we can compute
\begin{eqnarray}\nonumber \rho^{\sharp}(\bar{g})(y_N^A) &=& \rho^{\sharp}(\bar{g})(y_N) + \frac{d\rho^{\sharp}(\bar{g})(y_N)}{dy_N}(y_N^A-y_N)
\\\label{Schulz} &=& y_N^g+\varepsilon y_N\frac{dy_N^g}{dy_N}\sum_{s \in N} \frac{d_s(f_{s})}{f_s}.\end{eqnarray} Since $d_{\bar{g}}^{\sharp}(y_N^g) = \frac{dy_N^g}{dy_N} d_{\bar{g}}^{\sharp}(y_N)$, equations (\ref{Witkiewicz}), (\ref{Milosz}) and (\ref{Schulz}) give
\begin{equation}\label{Herbert}d_{\bar{g}}^{\sharp}(y_N)=y_N^g \left(\frac{dy_N^g}{dy_N}\right)^{-1}\!\!\sum_{s \in N}  \frac{d_{gs}(f_{gs})}{f_{gs}} - y_N \sum_{s \in N}  \frac{d_s( f_s)}{f_s}.\end{equation}\renewcommand{\qedsymbol}{}To see that the map given by this formula makes the diagram commute, we need a computation. We break the proof here in order to state a number of lemmas necessary to perform this computation. \end{proof}Since we will need to make similar computations later on, we state the results in a version useful also for those future applications.

\begin{lemma}\label{nosorozec} Let $\rho \colon G \to \Gamma_k$ be a local $G$-action, $N$ be a normal subgroup of $G$ and denote $f_g=\rho(g)(t)$. Let $a \colon G \to k(\!(t)\!)$ be a cochain such that $$a_{gs}=a_s^g+a_g, \quad g\in G, s \in N.$$ Let $$\alpha = -  y_N \left(\frac{dy_N}{dt}\right)^{-1}\!\! \sum_{s \in N} a_s\frac{df_s}{dt}f_s^{-1} \in k (\!(t)\!).$$ 
Then \begin{enumerate} \item[\textup{(i)}] $\displaystyle \alpha^g-\alpha=a_g-y_N^g\left(\frac{dy_N^g}{dt}\right)^{-1}\!\!\sum_{s \in N}  a_{gs}\frac{df_{gs}}{dt}f_{gs}^{-1}+ y_N\left(\frac{dy_N}{dt}\right)^{-1}\!\!\sum_{s \in N}  a_s\frac{df_s}{dt}f_s^{-1}.$
In particular, $$\alpha^s-\alpha=a_s \quad \text{for} \quad s\in N.$$
\item[\textup{(ii)}] Denote by $\ord$ the $t$-adic valuation on $k(\!(t)\!)$ and by $d = \ord\left(\mathcal{D}_{R/R^N}\right)$  the order of the different of the extension $R^N \subseteq R$. Then $$\ord(\alpha) \geq \inf_{s \in N} \ord(a_s) + \# N - d - 1.$$ \end{enumerate}
\end{lemma}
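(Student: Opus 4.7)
The plan is to prove (i) by a direct chain-rule computation and (ii) by tracking the order of each factor appearing in $\alpha$.

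For (i), I would first compute $\alpha^g=\rho(g)(\alpha)$ by applying $\rho(g)$ to every factor. Recall that for $F\in k(\!(t)\!)$, the automorphism $\rho(g)$ acts by substitution $F\mapsto F(f_g)$, so differentiating via the chain rule gives the identity
$$\rho(g)\!\left(\frac{dF}{dt}\right)=\frac{d(\rho(g)F)/dt}{df_g/dt}.$$
Applying this to $F=y_N$ and to each $F=f_s$ (and using $\rho(g)(y_N)=y_N^g$, $\rho(g)(f_s)=f_{gs}$), the extraneous factor $df_g/dt$ appears once in the numerator and once inside the sum, so it cancels and
$$\alpha^g=-y_N^g\left(\frac{dy_N^g}{dt}\right)^{-1}\!\!\sum_{s\in N} a_s^g\,\frac{df_{gs}}{dt}\,f_{gs}^{-1}.$$
Next I would substitute $a_s^g=a_{gs}-a_g$ from the cocycle hypothesis. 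The resulting $-a_g$-term factors out as $a_g\cdot y_N^g(dy_N^g/dt)^{-1}\sum_s (df_{gs}/dt)f_{gs}^{-1}$, and the sum here is precisely the logarithmic derivative of $y_N^g=\prod_s f_{gs}$, namely $(dy_N^g/dt)/y_N^g$. The whole prefactor collapses to $a_g$. Subtracting the defining expression of $\alpha$ gives the formula in (i). The special case $g=s\in N$ is immediate once one notes that $y_N^s=y_N$ (so $dy_N^s/dt=dy_N/dt$) and that the reindexing $s'\mapsto ss'$ identifies the two surviving sums.

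For (ii), I would estimate the valuation factor by factor: $\ord(y_N)=\sum_{s\in N}\ord(f_s)=\#N$, $\ord(f_s)=1$, and $\ord(df_s/dt)=0$ because $f_s=\rho(s)(t)$ has a non-zero linear term. For the last ingredient, $\ord(dy_N/dt)=d$: this follows from the second fundamental exact sequence of K\"ahler differentials for $k\subseteq R^N\subseteq R$. Since $y_N$ is a uniformizer of $R^N$, the module $\Omega_{R^N/k}\otimes_{R^N}R$ is free on $dy_N\otimes 1=(dy_N/dt)\,dt$, so $\Omega_{R/R^N}\cong R/(dy_N/dt)$, and hence $(dy_N/dt)=\mathcal{D}_{R/R^N}$. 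Combining these bounds yields $\ord(\alpha)\geq\#N-d+\min_s\ord(a_s)-1$, as claimed.

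The write-up itself poses no serious obstacle; the only delicate point is performing the chain-rule cancellation in (i) cleanly, and recalling the standard identification in (ii) that $dy_N/dt$ generates the different rather than trying to match it term-by-term against $h'(t)$ for the minimal polynomial $h$ of $t$ over $R^N$.
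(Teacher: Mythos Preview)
Your proposal is correct and follows essentially the same route as the paper: the same chain-rule computation of $\alpha^g$ (with the $df_g/dt$ factors cancelling), the same substitution $a_s^g=a_{gs}-a_g$ followed by the logarithmic-derivative identity $\frac{dy_N^g}{dt}=y_N^g\sum_s\frac{df_{gs}}{dt}f_{gs}^{-1}$ to collapse the $a_g$-term, and the same term-by-term valuation estimate for (ii). The only cosmetic difference is that the paper cites Serre for $\ord(dy_N/dt)=d$, whereas you sketch it via $\Omega_{R/R^N}\cong R/(dy_N/dt)$.
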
  \begin{proof} (i) We have $y_N^g=\prod_{s\in N} f_{gs}$ and thus \begin{equation}\label{mikrostruktura}\frac{dy_N^g}{dt}=y_N^g\sum_{s\in N}\frac{df_{gs}}{dt}f_{gs}^{-1}.\end{equation}
We compute \begin{eqnarray}\label{ciernisty}\alpha^g&=&-y_N^g\left(\frac{d y_N^g}{df_g}\right)^{-1}\!\!\sum_{s\in N} a_s^g\frac{df_{gs}}{df_g}f_{gs}^{-1} \\ \nonumber &=& y_N^g\left(\frac{d y_N^g}{dt}\right)^{-1}\!\!a_g\sum_{s\in N}\frac{df_{gs}}{dt}f_{gs}^{-1} -y_N^g\left(\frac{d y_N^g}{dt}\right)^{-1}\!\!\sum_{s\in N}a_{gs}\frac{df_{gs}}{dt}f_{gs}^{-1}.\end{eqnarray} Equations (\ref{mikrostruktura}) and (\ref{ciernisty}) yield \begin{eqnarray*}\alpha^g&=&a_g-y_N^g\left(\frac{d y_N^g}{dt}\right)^{-1}\!\!\sum_{s\in N}a_{gs}\frac{df_{gs}}{dt}f_{gs}^{-1}.\end{eqnarray*} This gives the claim. 

 (ii) We know that $\ord(\frac{dy_N}{dt}) =d$ \cite[Corollaire III.6.2]{Serre} and $\ord(y_N)=\#N$. We now have
$$\ord(\alpha) \geq \ord(y_N)-\ord\left(\frac{dy_N}{dt}\right)+\inf_{s\in N}{\ord(a_s \frac{df_s}{dt}f_s^{-1})} = \# N - d + \inf_{s \in N}(\ord(a_s))-1, $$ since $\ord(f_s)=1$ and $\ord(\frac{df_s}{dt})=0$.
\end{proof}

\begin{prop}[Effective version of Hilbert's theorem 90]\label{przesiedlenie} Let $I \subseteq J$ be two fractional ideals of $R=k \psl t \psr$ and let $\rho\colon G\to \Aut_k R$ be a local $G$-action. Write $d = \ord(\mathcal{D}_{k\psl t \psr/k\psl t \psr^N})$. Assume that $$\ord (IJ^{-1}) \geq d - \# G +1.$$ Then the natural map $\HO^1(G,I) \to \HO^1(G,J)$ is the zero map.
\end{prop}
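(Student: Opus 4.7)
The plan is to apply Lemma \ref{nosorozec} directly, specialized to the case $N = G$. Let $a \colon G \to I$ be a $1$-cocycle; to show that $[a]$ vanishes in $\HO^1(G,J)$ it suffices to produce an $\alpha \in J$ satisfying $\alpha^g - \alpha = a_g$ for every $g \in G$. Guided by the formula in Lemma \ref{nosorozec}, I would set
$$\alpha = -y_G \left(\frac{dy_G}{dt}\right)^{-1}\sum_{s \in G} a_s \frac{df_s}{dt} f_s^{-1} \in k(\!(t)\!),$$
where $y_G = \prod_{g \in G}\rho(g)(t)$ and $f_s = \rho(s)(t)$.

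When the lemma is read with $N = G$, its hypothesis $a_{gs} = a_s^g + a_g$ is precisely the $1$-cocycle condition for $a$, so the lemma applies. Part (i) then gives $\alpha^s - \alpha = a_s$ for every $s \in N = G$, i.e., exactly the desired coboundary relation. In particular $[a]$ already vanishes in $\HO^1(G, k(\!(t)\!))$, and the only remaining issue is to verify that $\alpha$ actually lies in the smaller ideal $J$.

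This is where part (ii) of the lemma comes in: since each $a_s$ lies in $I$, we obtain
$$\ord(\alpha) \;\geq\; \inf_{s \in G} \ord(a_s) + \#G - d - 1 \;\geq\; \ord(I) + \#G - d - 1.$$
The hypothesis $\ord(IJ^{-1}) \geq d - \#G + 1$ rearranges to exactly $\ord(I) + \#G - d - 1 \geq \ord(J)$, so $\alpha \in J$. Hence $[a]$ is zero in $\HO^1(G,J)$, and since $a$ was an arbitrary cocycle, the natural map $\HO^1(G,I) \to \HO^1(G,J)$ is the zero map.

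There is no real obstacle left: the entire computational burden of the ``effective'' bound has been packaged into Lemma \ref{nosorozec}. The one subtlety worth pointing out is that specializing to $N = G$ promotes the partial trivialization $\alpha^s - \alpha = a_s$ of part (i) to a trivialization on all of $G$, which is precisely what converts the order estimate of part (ii) into an honest vanishing statement for $[a]$.
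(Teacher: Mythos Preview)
Your proof is correct and follows essentially the same approach as the paper: apply Lemma~\ref{nosorozec} with $N=G$, use part~(i) to get $\alpha^g-\alpha=a_g$, and part~(ii) together with the hypothesis to show $\alpha\in J$. The paper's version is just a terser write-up of the same argument.
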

\begin{proof}[Proof of Proposition \ref{przesiedlenie}]
Apply Lemma \ref{nosorozec} to the cocycle $a\colon G \to I$ and $N=G$. We obtain in this way an $\alpha \in k(\!(t)\!)$ such that $\alpha^g-\alpha=a_g$ and
$$\ord(\alpha) \geq \inf_{g \in G}(\ord(a_g))+\# G-d-1.$$ By the assumption, this gives $$\ord(\alpha) \geq \ord(I)-\ord(IJ^{-1})=\ord(J).$$ Hence $\alpha \in J$. Since $a_g = \alpha^g - \alpha$, the image of $(a_g)$ vanishes in $\HO^1(G,J)$.
\end{proof}

\begin{remark}
Denote by $K$ the fraction field of $R$. By the additive Hilbert's theorem 90, the group $\HO^1(G,K)$ vanishes and hence the composite map $\HO^1(G,I) \to \HO^1(G,J) \to \HO^1(G,K)$ is zero. The lemma gives an explicit bound on where the ``splitting" happens.
\end{remark}

We will now give a version of Lemma \ref{nosorozec} in a slightly different context.

\begin{wniosek}\label{prefektura} Let $d \colon G \to \der$ be a cochain such that $$d_{gs}=d_s^g+d_g, \quad g\in G, s \in N.$$ Let $$\alpha = - y_N  \frac{dy_G}{dy_N} \sum_{s \in N} \frac{d_s(f_s)}{f_s}.$$ Then \begin{enumerate} \item[\textup{(i)}] $\displaystyle \alpha^g-\alpha=d_g(y_G)-y_N^g\frac{dy_G}{dy_N}\left(\frac{dy_N^g}{dy_N}\right)^{-1}\!\!\sum_{s \in N}  \frac{d_{gs}(f_{gs})}{f_{gs}} + y_N\frac{dy_G}{dy_N} \sum_{s \in N}  \frac{d_s( f_s)}{f_s}.$ \item[\textup{(ii)}] $\alpha$ lies in $\der_1$ (under the identification of Lemma \ref{rykliwy}). \end{enumerate}
\end{wniosek}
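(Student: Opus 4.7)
The plan is to deduce both claims from Lemma \ref{nosorozec} by applying it to the cochain $a \colon G \to k(\!(t)\!)$ defined by $a_g = d_g(y_G)$. I would first check that this $a$ satisfies the cocycle hypothesis $a_{gs} = a_s^g + a_g$. Evaluating the given identity $d_{gs} = d_s^g + d_g$ at the $G$-invariant element $y_G$, and using that $d_s^g(y_G) = \rho(g) d_s \rho(g)^{-1}(y_G) = \rho(g)(d_s(y_G)) = (d_s(y_G))^g = a_s^g$, one obtains the desired relation.

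Next, I would identify the $\alpha$ produced by Lemma \ref{nosorozec} for this particular $a$ with the $\alpha$ of the present statement. Writing $d_s = b_s \partial_t$ with $b_s \in k\psl t \psr$, one has $a_s = b_s y_G'$ and $d_s(f_s) = b_s f_s'$, hence $a_s \frac{df_s}{dt} f_s^{-1} = y_G' \cdot \frac{d_s(f_s)}{f_s}$. Pulling the factor $y_G' = dy_G/dt$ out of the sum and using the chain-rule identity $(dy_G/dt)(dy_N/dt)^{-1} = dy_G/dy_N$ converts the expression in Lemma \ref{nosorozec} into the one given here. Claim (i) is then an immediate rewriting of Lemma \ref{nosorozec}(i), using the two analogous chain-rule identities $y_N^g (dy_N^g/dt)^{-1} y_G' = y_N^g \frac{dy_G}{dy_N}(dy_N^g/dy_N)^{-1}$ and $y_N (dy_N/dt)^{-1} y_G' = y_N \frac{dy_G}{dy_N}$, together with $a_g = d_g(y_G)$.

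For claim (ii), the most transparent route is direct: write $\alpha = \frac{dy_G}{dy_N} \cdot \beta$ with $\beta = -y_N \sum_{s \in N} d_s(f_s)/f_s$. Since $\ord(f_s) = 1$ and $d_s(f_s) \in k\psl t\psr$, we have $\ord(d_s(f_s)/f_s) \geq -1$; combined with $\ord(y_N) = \#N \geq 1$, this yields $\beta \in k\psl t\psr$. Hence $\alpha \in \frac{dy_G}{dy_N} k\psl t\psr$, which via the chain rule $y_G' = \frac{dy_G}{dy_N} \cdot y_N'$ and the tower formula for the different equals $\mathcal{D}_{k\psl t\psr^N/k\psl t\psr^G} k\psl t\psr = \der_1$. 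Alternatively, Lemma \ref{nosorozec}(ii) gives $\ord(\alpha) \geq \ord(y_G') + \#N - \ord(\mathcal{D}_{k\psl t\psr/k\psl t\psr^N}) - 1$, and the tower formula converts this into precisely the bound defining $\der_1$. The proof is essentially mechanical; the only care required is aligning the $G$-action conventions between $k(\!(t)\!)$ and $\der$, which is automatic from the $G$-invariance of $y_G$.
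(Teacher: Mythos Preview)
Your proposal is correct and follows essentially the same route as the paper: both parts are deduced from Lemma~\ref{nosorozec} applied to $a_g = d_g(y_G)$ via the $G$-equivariant identification of Lemma~\ref{rykliwy}, with (ii) obtained by the direct order estimate $\ord(\alpha) \geq \#N + \ord(dy_G/dy_N) - 1 \geq \ord(\der_1)$. One small slip: you wrote $d_s^g = \rho(g)\, d_s\, \rho(g)^{-1}$, whereas the paper's convention is $d^g = \rho(g)^{-1} d\, \rho(g)$; but since you (rightly) invoke the $G$-equivariance of $d \mapsto d(y_G)$ rather than relying on that computation, the argument goes through unchanged.
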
 
\begin{proof} (i) Apply Lemma \ref{nosorozec} to the cocycle $a \colon G \to k(\!(t)\!)$ obtained by composing $d \colon G \to \der$ with the isomorphism $\der \simeq \mathcal{D}_{k\psl t\psr/k\psl t \psr^G} \subseteq k(\!(t)\!)$. Then $$a_g=d_g(y_G)=\frac{dy_G}{dt}d_g(t)$$ and the claim follows. 

(ii) From the definition of $\alpha$, we get $$\ord(\alpha)\geq \#N+\ord\left(\frac{dy_G}{dy_N}\right)-1\geq \ord\left(\frac{dy_G}{dy_N}\right).$$ By Lemma \textup{\ref{rykliwy}}, $\ord(\frac{dy_G}{dy_N})=\ord(\der_1)$, which gives the claim.
\end{proof}

\begin{proof}[Proof of Theorem \ref{przebaczenie} continued]
We will interpret elements of $\der$ and $\derq$ not as derivations, but as elements of the differents (cf. Lemma \ref{rykliwy}). Corollary \ref{prefektura}, together with equation (\ref{Herbert}) shows that there exists an element $\alpha \in \der_1$ such that $$\alpha^g-\alpha=d_g(y_G)-\frac{dy_G}{dy_N}d_{\bar{g}}^{\sharp}(y_N)= d_g(y_G)-d_{\bar{g}}^{\sharp}(y_G).$$ 
Since $$\ind_{k[\varepsilon]}([d_g(y_G)])=[d_{\bar{g}}^{\sharp}(y_G)]=[d_g(y_G)-\alpha^g+\alpha],$$ this proves that $$\inf(\ind_{k[\varepsilon]}([d_g(y_G)]))=[d_g(y_G)],$$ and hence the right hand side triangle commutes. 
\end{proof}
\end{subsection}
\end{section}

\begin{section}{D\'evissage}
\begin{subsection}{Tangent map}

\begin{thm}\label{Mandelsztam} There is a morphism $\gamma$ (constructed below) such that the following diagram commutes:

$$\xymatrix{\HO^1(G/N,\der^N) \ar@{^{(}->}[r]^-{\infl} \ar@{=}[d] & \HO^1(G,\der) \ar[r]^-{\res} \ar[d]^-{\ind}&\HO^1(N,\der)^{G/N}  \ar[r]^-{\transgresja} \ar@{..>}[d]^{\gamma}&\HO^2(G/N,\der^N)  \ar[r]^-{\infl} \ar@{=}[d]&\HO^2(G,\der) 
 \\\HO^1(G/N,\der^N) \ar[r] & \HO^1(G/N,\derq)  \ar[r]^-{\sigma}&\HO^1(G/N,\der^{\sharp}\!/\der^N)  \ar[r]^-{\partial}&\HO^2(G/N,\der^N). &
}$$
\end{thm}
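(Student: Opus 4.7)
The leftmost square of the diagram commutes by the left-hand triangle of Theorem \ref{przebaczenie}, and the rightmost relation is the identity on $\HO^2(G/N,\der^N)$. The substance of the proof is thus constructing $\gamma$ and verifying the middle and right-hand squares.

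I plan to obtain $\gamma$ as a connecting homomorphism. Apply $N$-cohomology to the short exact sequence of $G$-modules $0 \to \der \to \der_1 \to \der_1/\der \to 0$; one obtains the long exact sequence $0\to\der^N\to\derq\to(\der_1/\der)^N\to \HO^1(N,\der)\to\HO^1(N,\der_1)$. The key observation is that the last map vanishes. This follows from Proposition \ref{przesiedlenie} applied to the $N$-action with $I=\der$ and $J=\der_1$: by the tower formula for differents and Lemma \ref{rykliwy} one has $\der\cdot\der_1^{-1}=\mathcal{D}_{k\psl t\psr/k\psl t\psr^N}$, so the numerical hypothesis $\ord(IJ^{-1})\geq d-\#N+1$ reduces to $\#N\geq 1$. (Equivalently, Corollary \ref{prefektura}(ii) gives an explicit element of $\der_1$ splitting any $N$-cocycle.) The remaining portion of the sequence therefore reads
\[
0 \to \derq/\der^N \to (\der_1/\der)^N \to \HO^1(N,\der) \to 0,
\]
a short exact sequence of $G/N$-modules, and I define $\gamma$ to be its connecting homomorphism on $G/N$-cohomology.

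For the middle square, take $[d]\in \HO^1(G,\der)$ represented by a cocycle $d\colon G\to\der$, so $\res([d])=[d|_N]$. A lift of $[d|_N]$ to $(\der_1/\der)^N$ is the class of the explicit element $\alpha:=-y_N\,\frac{dy_G}{dy_N}\sum_{s\in N}d_s(f_s)/f_s\in\der_1$ of Corollary \ref{prefektura}, since by Lemma \ref{nosorozec}(i), $\alpha^s-\alpha=d_s(y_G)$ for $s\in N$. Hence $\gamma(\res([d]))$ is represented by the cocycle $\bar g\mapsto \alpha^g-\alpha\pmod{\der}$, and applying Corollary \ref{prefektura}(i) with $d$ as the extension to $G$ gives $\alpha^g-\alpha = d_g(y_G)-d_{\bar g}^{\sharp}(y_G)\equiv -d_{\bar g}^{\sharp}(y_G)\pmod{\der}$, which matches $\sigma(\ind([d]))$ up to the sign convention of the connecting map.

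The right-hand square $\partial\circ\gamma=\transgresja$ is the delicate step. Given $[c]\in\HO^1(N,\der)^{G/N}$, fix a set-theoretic section $G/N\to G$ and extend $c$ to a cochain $\tilde c\colon G\to\der$ satisfying the Corollary \ref{prefektura} relation $\tilde c_{gs}=\tilde c_s^g+\tilde c_g$. Form $\alpha$ from $c$ as above; Corollary \ref{prefektura}(i) yields $\alpha^g-\alpha=\tilde c_g(y_G)-\tilde c_{\bar g}^{\sharp}(y_G)$, where $\tilde c_{\bar g}^{\sharp}$ is defined by the right-hand side of equation (\ref{Herbert}) applied to $\tilde c$, and a direct check using the extension relation shows $\tilde c_{\bar g}^{\sharp}\in\derq$. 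Thus $\beta_{\bar g}:=-\tilde c_{\bar g}^{\sharp}(y_G)\in\derq$ is a lift of $\gamma([c])_{\bar g}$ to $\derq$, and $\partial(\gamma([c]))$ is represented by the 2-cocycle $\delta\beta\in\der^N$. A direct computation of $\delta\beta$, using the extension relation for $\tilde c$ and the $G$-action on $\der_1$, reduces it to the 2-cocycle $(g,h)\mapsto \tilde c_{gh}(y_G)-\tilde c_g(y_G)-\tilde c_h^g(y_G)$, which is the standard Hochschild--Serre representative of the transgression $\transgresja([c])$.

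The hardest part is this last matching of 2-cocycles: a careful, sign-sensitive expansion using Corollary \ref{prefektura}(i) and repeated application of Lemma \ref{nosorozec}(i) to cancel the $\alpha^g-\alpha$ contributions and leave only the Hochschild--Serre transgression formula.
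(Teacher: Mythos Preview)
Your construction of $\gamma$ via the short exact sequence $0 \to \derq/\der^N \to (\der_1/\der)^N \to \HO^1(N,\der) \to 0$ is exactly the paper's approach, and your middle-square argument using Corollary \ref{prefektura} is correct in substance. However, the sign cannot be left as ``up to convention'': with $\gamma$ equal to the connecting map itself, the middle square \emph{anti}-commutes (you found $\alpha^g-\alpha \equiv -d_{\bar g}^{\sharp}(y_G)$, whereas $\sigma\circ\ind$ gives $+d_{\bar g}^{\sharp}(y_G)$), and the same computation below shows the right square then also anti-commutes. The paper fixes this by \emph{defining} $\gamma$ to be the negative of the boundary map; you must do the same, and then both squares commute on the nose.

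The right-square argument has a genuine gap. Your extension $\tilde c$ is required to satisfy only $\tilde c_{gs}=\tilde c_s^g+\tilde c_g$; this alone does not guarantee that $\tilde c_{\bar g}^{\sharp}(y_G)$ lies in $\derq=\der_1^N$, nor that the $2$-cochain $(g,h)\mapsto \tilde c_h^g-\tilde c_{gh}+\tilde c_g$ descends to $G/N\times G/N$ with values in $\der^N$ and represents the transgression. The paper instead invokes \cite[Prop.~1.6.5]{Neukirch}, choosing $b_g\in\der$ satisfying all three conditions $b_s=d_s$, $b_{sg}=b_g^s+b_s$, $b_{gs}=b_s^g+b_g$, under which $\transgresja([d_s])=[b_h^g-b_{gh}+b_g]$ is a theorem, and equation \eqref{zagrozenie} forces $a_g:=\alpha^g-\alpha-b_g\in\der_1^N$. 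The commutativity of the right square is then a one-line observation: since $a_g+b_g=\alpha^g-\alpha$ is a $1$-coboundary, $\delta a=-\delta b$, hence $\partial(\gamma([d_s]))=\partial(-[\bar a_{\bar g}])=-[\delta a]=[\delta b]=\transgresja([d_s])$. Your ``direct computation'' would need to reproduce both the well-definedness of $a_g$ in $\der_1^N$ and the Neukirch transgression formula, and condition (iii) alone does not supply either.
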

\begin{proof}
The map $\transgresja$ is the trangression and the upper row of this diagram is the extended inflation-restriction sequence coming from the Hochschild-Serre spectral sequence in group cohomology \cite[Remark following Theorem III.2]{Hochschild}. The bottom row arises from a long exact sequence in cohomology. The short exact sequence $$ 0 \rightarrow \der \rightarrow \der_1 \rightarrow \der_1\!/\der \rightarrow 0 $$ induces a long exact sequence of cohomology groups $$ 0 \rightarrow \der^N \rightarrow \der_1^N \rightarrow (\der_1\!/\der)^N \rightarrow \HO^1(N,\der) \rightarrow \HO^1(N,\der_1) \rightarrow \ldots.$$ Since the map $\HO^1(N,\der) \to \HO^1(N,\der_1)$ is zero (Proposition \ref{przesiedlenie}), we get a short exact sequence $$ 0 \rightarrow \der_1^N\!/\der^N \rightarrow (\der_1\!/\der)^N \rightarrow\HO^1(N,\der) \rightarrow 0.$$ Again, it induces a long exact sequence in cohomology \begin{align}\label{Gaza}\xymatrix{0 \ar[r] & (\der_1^N\!/\der^N)^{G/N} \ar[r] & (\der_1\!/\der)^G \ar[r] & \HO^1(N,\der)^{G/N}  \ar[r]^<<<<<{-\gamma} &} \\ \nonumber \xymatrix{ \ar[r]^<<<<<{-\gamma} & \HO^1(G/N, \der_1^N\!/\der^N) \ar[r] & \HO^1(G/N,(\der_1\!/\der)^N) \ar[r] & \ldots,}\end{align} where $-\gamma \colon \HO^1(N, \der)^{G/N} \rightarrow \HO^1(G/N,\der_1^N\!/\der^N)$ is the boundary map. We take $\gamma$ to be $\gamma=-(-\gamma)$, i.e., $\gamma$ is to take values opposite to the boundary map. 

We now analyse the construction step by step in order to obtain an explicit definition of the map $\gamma$. For a cocycle $d \colon N  \rightarrow \der$, we can find an element $\alpha \in \der_1$ such that $d_s=\alpha^s-\alpha$ for $s \in N$. If, furthermore, $[d_s]$ is $G/N$-invariant, then the cocycles $(d_s)$ and $(d^g_{g^{-1}sg})$ differ by a coboundary, i.e., there exists a $b_g \in \der$ such that \begin{equation}\label{zagrozenie}d^g_{g^{-1}sg}=d_s + b_g^s - b_g.\end{equation} This shows that $\alpha^g-\alpha-b_g$ is $N$-invariant and hence $\alpha^g-\alpha$ lies in $\der_1^N+\der$ and we can write $$\alpha^g-\alpha=a_g+b_g \quad \text{with} \quad a_g \in \der_1^N, b_g \in \der.$$ The elements $a_g$ and $b_g$ are well-defined only up to an element of $\der^N$. Since $$a_{gs}+b_{gs}=(\alpha^s-\alpha)^g+(\alpha^g-\alpha)=a_s^g+b_s^g+a_g+b_g,$$ and since $a_s \equiv 0 \pmod {\der^N}$, we see that the class of $a_g$ in $\der_1^N\!/\der^N$ depends only on the class of $g$ in $G/N$. Furthermore, by the construction of the boundary map in (\ref{Gaza}) we get $\gamma([d_s])=-[\overline{a}_{\bar{g}}]$.

If the cocycle $(d_s)$ is a restriction of a cocycle $d\colon G \to \der$, then in (\ref{zagrozenie}) we can take $b_g=d_g$. We then get $$\gamma([d_s])=[d_g-\alpha^g+\alpha].$$ This shows that the central square commutes.

By \cite[Proposition 1.6.5, and beginning of its proof]{Neukirch} we know that we can choose $b_g \in \der$ in (\ref{zagrozenie}) in such a way that \begin{enumerate} \item[(i)] $b_s=d_s$ for $s\in N$, \item[(ii)] $b_{sg}=b_g^s+b_s$ for $s\in N$ and $g \in G$, \item[(iii)] $b_{gs}=b_s^g+b_g$ for $s\in N$ and $g \in G$. \end{enumerate} Then $\transgresja([d_s])= [b_h^g-b_{gh}+b_g]$ (cf. ibid.). On the other hand $\gamma([d_s])=-[\overline{a}_{\bar{g}}].$ Since the map $\HO^1(G/N,\der^{\sharp}\!/\der^N) \rightarrow \HO^2(G/N,\der^N)$ is the boundary map in cohomology, and since $a_g+b_g=\alpha^g-\alpha$ is a coboundary, we see that the third square also commutes. \end{proof} For future reference, we state explicitly the form of the map $\gamma$.
\begin{wniosek}\label{architektura} In the notation of the proof of Theorem \ref{Mandelsztam}, we have $$\gamma([d_s])=-[\overline{a}_{\bar{g}}].$$ \end{wniosek}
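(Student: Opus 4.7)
The plan is to extract this formula directly from the proof of Theorem \ref{Mandelsztam}, where it is derived in passing. The corollary is not an independent statement but merely isolates a computation carried out inside the long exact sequence \eqref{Gaza} for convenient future reference.

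I would begin by recalling the construction of $\gamma$: by definition $\gamma=-(-\gamma)$, where $-\gamma$ is the connecting homomorphism $\HO^1(N,\der)^{G/N}\to \HO^1(G/N,\der_1^N/\der^N)$ attached to the short exact sequence
$$0 \to \der_1^N/\der^N \to (\der_1/\der)^N \to \HO^1(N,\der) \to 0.$$
To make this connecting map explicit on a $G/N$-invariant class $[d_s]$, I would follow the standard zig-zag: represent the class by a cocycle $d\colon N\to \der$; lift it through the surjection $(\der_1/\der)^N\twoheadrightarrow \HO^1(N,\der)$ by producing $\alpha\in \der_1$ with $\alpha^s-\alpha=d_s$ for $s\in N$, which is possible precisely because $\HO^1(N,\der)\to\HO^1(N,\der_1)$ vanishes by Proposition \ref{przesiedlenie}; then form the $G/N$-cochain $g \mapsto \overline{\alpha^g-\alpha}\in (\der_1/\der)^N$ and invoke the decomposition $\alpha^g-\alpha=a_g+b_g$ with $a_g\in \der_1^N$ and $b_g\in \der$ already established in the proof of Theorem \ref{Mandelsztam} in order to lift this cochain to the cocycle $g\mapsto \overline{a}_{\bar{g}}$ with values in $\der_1^N/\der^N$. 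By the very definition of the connecting map, $-\gamma([d_s])=[\overline{a}_{\bar{g}}]$, whence $\gamma([d_s])=-[\overline{a}_{\bar{g}}]$.

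I do not anticipate any genuine obstacle: the existence of $\alpha$, the validity of the decomposition $\alpha^g-\alpha=a_g+b_g$ modulo $\der^N$, and the independence of the class $[\overline{a}_{\bar{g}}]$ from the choices are all verified within the proof of Theorem \ref{Mandelsztam}. The only point requiring care is keeping track of the sign, which is dictated by the convention $\gamma=-(-\gamma)$ imposed just after \eqref{Gaza}; this accounts for the minus sign appearing in the final formula.
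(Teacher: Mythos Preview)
Your proposal is correct and mirrors the paper exactly: the corollary has no separate proof in the paper, being nothing more than a restatement of the line ``by the construction of the boundary map in (\ref{Gaza}) we get $\gamma([d_s])=-[\overline{a}_{\bar{g}}]$'' from the proof of Theorem~\ref{Mandelsztam}, and your zig-zag computation of the connecting homomorphism reproduces precisely the argument given there.
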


We will now investigate the morphism of functors $$\Phi = (\res,\ind) \colon D_G \to D_N^{G/N} \times D_{G/N}.$$\begin{wniosek}\label{Jugendstill} We have: \begin{enumerate} \item[\textup{(i)}] $\ker \Phi_{k[\varepsilon]} = \coker(\der_1^{G} \to (\der_1^N\!/\der^N)^{G/N})$; \item[\textup{(ii)}] $\im \Phi_{k[\varepsilon]} = \HO^1(N,\der)^{G/N} \times_{\HO^1(G/N, \der^{\sharp}\!/\der^N)} \HO^1(G/N,\der^{\sharp})$. \end{enumerate} 
\end{wniosek}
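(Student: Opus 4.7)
The plan is to prove both parts by diagram-chasing the commutative diagram of Theorem~\ref{Mandelsztam}, using the exactness of its top row (the extended inflation-restriction sequence) and of its bottom row (the long exact $G/N$-cohomology sequence of $0 \to \der^N \to \derq \to \derq/\der^N \to 0$).

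For part (i), I would begin by observing that $x \in \ker\Phi_{k[\varepsilon]}$ iff $\res(x)=0$ and $\ind(x)=0$. The inflation-restriction sequence identifies the elements with $\res(x)=0$ as those of the form $x = \infl(y)$ for a unique $y \in \HO^1(G/N,\der^N)$; commutativity of the leftmost square of Theorem~\ref{Mandelsztam} then shows $\ind(\infl(y))$ is the image of $y$ under the map $\iota \colon \HO^1(G/N,\der^N) \to \HO^1(G/N,\derq)$ induced by $\der^N \hookrightarrow \derq$. Injectivity of $\infl$ gives $\ker\Phi_{k[\varepsilon]} \cong \ker\iota$, and the long exact sequence of $0 \to \der^N \to \derq \to \derq/\der^N \to 0$ expresses this kernel as $\coker\bigl(\derq^{G/N} \to (\derq/\der^N)^{G/N}\bigr)$. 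Since $\derq = \der_1^N$, this is exactly $\coker\bigl(\der_1^G \to (\der_1^N/\der^N)^{G/N}\bigr)$, giving (i).

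For part (ii), the inclusion $\im\Phi_{k[\varepsilon]}$ inside the fibered product is immediate from the commutativity of the middle square of Theorem~\ref{Mandelsztam}. For the reverse inclusion, take $(a,b)$ in the fiber product, so that $\gamma(a) = \bar b$ in $\HO^1(G/N,\der^{\sharp}/\der^N)$. I would first show $a \in \im(\res)$: by the extended inflation-restriction sequence it suffices to verify $\transgresja(a) = 0$, and commutativity of the third square yields $\transgresja(a) = \partial(\gamma(a)) = \partial(\bar b) = 0$, the final equality because $\bar b$ lifts to $b \in \HO^1(G/N,\der^{\sharp})$. Pick $x' \in \HO^1(G,\der)$ with $\res(x') = a$ and set $b' := \ind(x')$. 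Then $b - b'$ dies in $\HO^1(G/N,\der^{\sharp}/\der^N)$ (since $\bar{b'} = \gamma(a) = \bar b$), so by the long exact sequence attached to $0 \to \der^N \to \der^{\sharp} \to \der^{\sharp}/\der^N \to 0$ there exists $c \in \HO^1(G/N,\der^N)$ whose image in $\HO^1(G/N,\der^{\sharp})$ equals $b - b'$. Then $x := x' + \infl(c)$ satisfies $\res(x) = a$ (since $\res\circ\infl = 0$) and, by the leftmost commutative square, $\ind(x) = b' + (b-b') = b$, so $\Phi_{k[\varepsilon]}(x) = (a,b)$.

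The genuine content is entirely packaged in Theorem~\ref{Mandelsztam}; the rest is a routine diagram chase. The one point requiring some care is the compatibility of the two connecting maps arising from $0 \to \der^N \to \derq \to \derq/\der^N \to 0$ and $0 \to \der^N \to \der^{\sharp} \to \der^{\sharp}/\der^N \to 0$, which is needed to identify $\partial(\gamma(a))$ with $\partial(\bar b)$ in the argument above; this follows formally from functoriality of boundary maps applied to the ladder induced by the inclusion $\derq \hookrightarrow \der^{\sharp}$.
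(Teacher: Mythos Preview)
Your argument is correct and follows essentially the same route as the paper's proof: both parts are obtained by chasing the diagram of Theorem~\ref{Mandelsztam}, using exactness of the inflation--restriction sequence on top and of the long exact sequence for $0 \to \der^N \to \derq \to \derq/\der^N \to 0$ on the bottom, and the argument for (ii) (show $\transgresja(a)=0$ via the third square, lift to some $x'$, then correct by $\infl$ of a class in $\HO^1(G/N,\der^N)$) is exactly the paper's. One small remark: your closing ``point requiring some care'' is unnecessary, since in this paper $\derq$ and $\der^{\sharp}$ denote the \emph{same} module (indeed $\der_1^N=\derq=\der^{\sharp}$, as noted just before Theorem~\ref{przebaczenie}), so the two short exact sequences you mention coincide and no compatibility check is needed.
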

\begin{proof} (i) From Theorem \ref{Mandelsztam} we see that $$\ker \Phi_{k[\varepsilon]}= \ker \left(\HO^1(G/N,\der^N)\to\HO^1(G/N,\der_1^N)\right).$$ The lower row in the diagram in  Theorem \ref{Mandelsztam} is a part of a long exact sequence associated to $$0 \to \der^N \to \der_1^N \to \der_1^N/\der^N \to 0.$$ This proves that  $$\ker \left(\HO^1(G/N,\der^N)\to\HO^1(G/N,\der_1^N)\right) = \coker \left(\der_1^G \to (\der_1^N\!/\der^N)^{G/N}\right).$$
(ii) The inclusion $$\im \Phi_{k[\varepsilon]} \subseteq \HO^1(N,\der)^{G/N} \times_{\HO^1(G/N, \der^{\sharp}\!/\der^N)} \HO^1(G/N,\der^{\sharp})$$ is trivial. 
For the other one, choose $$(\kappa,\lambda)\in\HO^1(N,\der)^{G/N} \times_{\HO^1(G/N, \der^{\sharp}\!/\der^N)} \HO^1(G/N,\der^{\sharp}).$$
Then by Theorem \ref{Mandelsztam}, $$\transgresja(\kappa)=\partial(\gamma(\kappa))=\partial(\sigma(\lambda))=0,$$ and hence there is a $\zeta \in \HO^1(G,\der)$ such that $\res(\zeta)=\kappa$.
We have then $\sigma(\ind(\zeta))=\sigma(\lambda)$, and hence $\ind(\zeta) - \lambda \in \ker \sigma$. Hence there is a $\xi \in \HO^1(G/N,\der^N)$ such that $$\ind(\zeta) - \lambda = \ind (\inf (\xi)).$$
Choosing $\zeta'=\zeta-\inf(\xi)$, we get $\Phi(\zeta')=(\kappa,\lambda)$. This proves the opposite inclusion.
\end{proof}
\end{subsection}

\begin{subsection}{Main result}

Inspired by Corollary \ref{Jugendstill}.ii, we pose the following question.

\begin{question} Does there exist a pro-representable functor $F$ with the tangent space $\HO^1(G/N, \der^{\sharp}\!/\der^N)$ and with morphisms $D_N^{G/N} \to F$ and $D_{G/N} \to F$ such that the morphism $\Phi$ maps $D_G$ into the fibered product $D_N^{G/N} \times_F D_{G/N}$ and such that the induced  morphism $$D_G \to D_N^{G/N} \times_F D_{G/N}$$ is smooth? \end{question}

\begin{remark}\label{pazdziernik} If the answer to this question is positive, and if the functors $D_N$, $D_{G/N}$ and $F$ are pro-representable with universal deformation rings $R_N$, $R_{G/N}$ and $R_F$, respectively, then the versal deformation ring $R_G$ of $D_G$ is of the form  $$R_G= \left(((R_N)/(G/N)) \widehat{\otimes}_{R_F} R_{G/N} \right) \psl x_1, x_2, \ldots, x_s\psr,$$ where $R_N/(G/N)$ denotes the ring of co-invariants (cf. Proposition \ref{purpura}) and $\widehat{\otimes}$ denotes the completed tensor product. \end{remark}

We begin be recalling the definition of an obstruction space in a relative setting, when a functor is replaced by a morphism of functors (for a general study of these morphism, cf.\ \cite{Fantechi}).

\begin{dfn}\label{sentyment} Let $f \colon D \to E$ be a morphism of functors $D,E \colon \kate \to \Set$. An obstruction space\index{obstruction space!to a morphism of functors} to $f$ is a vector space $V$ over $k$ together with a collection of maps $$\nu_e \colon D(A) \times_{E(A)} E(A') \to V \otimes I$$ defined for every small extension $e \colon 0 \to I \to A' \to A \to 0$ in $\kate$ (i.e., a small surjection $A' \to A$ with kernel $I$) and satisfying the following condition: For a morphism $e \to e'$ of small extensions \begin{displaymath}\xymatrix{ 0 \ar[r] &I \ar[d] \ar[r] & A' \ar[r]\ar[d] & A \ar[r]\ar[d] & 0 \\  0 \ar[r] &J  \ar[r] & B' \ar[r] & B \ar[r] &0 ,}\end{displaymath} the induced diagram \begin{displaymath} \xymatrix{ D(A) \times_{E(A)} E(A') \ar[r]^<<<<<{\nu_e} \ar[d] & V \otimes I\ar[d]\\ D(B) \times_{E(B)} E(B') \ar[r]^<<<<<{\nu_{e'}}& V \otimes J}\end{displaymath}commutes.\end{dfn} 
It is clear that if $\xi \in  D(A) \times_{E(A)} E(A')$ lies in the image of the map $$D(A') \to  D(A)\times_{E(A)}E(A'),$$ then $\nu_e(\xi)=0$. 

\begin{dfn} An obstruction space $V$ to a morphism $f \colon D \to E$ of functors $D,E\colon \kate \to \Set$ is called complete\index{obstruction space!complete} if for every small extension $e$ the following condition holds: If $\xi \in D(A) \times_{E(A)} E(A')$ is such that $\nu_e(\xi)=0$, then $\xi$ lies in the image of the map $$D(A') \to  D(A)\times_{E(A)}E(A').$$ \end{dfn}

For $E$ being the one-point functor, we recover the definition of an obstruction space to $D$.

\begin{prop}\label{wiarygodnosc} Let $D,D_1,D_2,F\colon \kate \to \Set$ be functors with a commutative diagram of morphism $$\xymatrix{ D \ar[r] \ar[d]& D_1 \ar[d]^f \\ D_2 \ar[r]^g&F.}$$ Assume that $F$ is pro-representable and that the morphism $D \to D_1 \times_F D_2$ is smooth. Then the tangent space $T_F$ to the functor $F$ is an obstruction space to the morphism $\Phi \colon D \to D_1 \times D_2$ (in the sense of definition \ref{sentyment}).\end{prop}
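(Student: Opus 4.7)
The plan is to define the obstruction map using the torsor structure on fibers of $F(A') \to F(A)$ and then reduce completeness to the assumed smoothness of $D \to D_1 \times_F D_2$. Fix a small extension $e\colon 0 \to I \to A' \to A \to 0$ and an element $\xi \in D(A) \times_{E(A)} E(A')$, where $E = D_1 \times D_2$. Such a $\xi$ consists of $\xi_D \in D(A)$ together with $\xi_1 \in D_1(A')$ and $\xi_2 \in D_2(A')$, whose reductions to $A$ agree with the images of $\xi_D$ in $D_1(A)$ and $D_2(A)$ respectively. By commutativity of the square, $f(\xi_1)$ and $g(\xi_2)$ both lie over the same element of $F(A)$, namely the common image of $\xi_D$. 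Since $F$ is pro-representable, the fiber of $F(A') \to F(A)$ through that element is a torsor under $T_F \otimes I$, so I can set
$$\nu_e(\xi) := f(\xi_1) - g(\xi_2) \in T_F \otimes I.$$

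Next I would verify the two conditions required of an obstruction space. Functoriality with respect to a morphism $e \to e'$ of small extensions is a formal check: naturality of $f, g$ in $A' \rightsquigarrow B'$, combined with the fact that the torsor structure on the fibers of $F(B') \to F(B)$ is induced from the one on $F(A') \to F(A)$ via $T_F \otimes I \to T_F \otimes J$, forces the square in Definition \ref{sentyment} to commute. The vanishing criterion on elements in the image of $D(A') \to D(A) \times_{E(A)} E(A')$ is then automatic: if $\xi$ lifts to some $\tilde\xi \in D(A')$, then $\xi_1, \xi_2$ are the images of $\tilde\xi$, so $f(\xi_1) = g(\xi_2)$ as elements of $F(A')$ and $\nu_e(\xi) = 0$.

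The substantive step is completeness. Suppose $\nu_e(\xi) = 0$, so that $f(\xi_1) = g(\xi_2)$ in $F(A')$. Then $(\xi_1, \xi_2)$ defines an element of $(D_1 \times_F D_2)(A')$, and $\xi_D \in D(A)$ maps to an element of $(D_1 \times_F D_2)(A)$ whose reduction coincides with that of $(\xi_1, \xi_2)$. This produces an element of $D(A) \times_{(D_1 \times_F D_2)(A)} (D_1 \times_F D_2)(A')$, and the assumed smoothness of $D \to D_1 \times_F D_2$ furnishes a lift to $D(A')$. That lift maps back to $\xi$ under $D(A') \to D(A) \times_{E(A)} E(A')$, which is exactly the required completeness property.

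The main obstacle I anticipate is purely notational/bookkeeping rather than conceptual: one must be careful that ``$f(\xi_1) - g(\xi_2)$'' is interpreted correctly using the torsor structure (it only makes sense because both elements lie in a common fiber, which in turn uses commutativity of the given square), and that the transition $(\xi_1,\xi_2) \in D_1(A') \times_{F(A')} D_2(A') = (D_1 \times_F D_2)(A')$ genuinely requires the \emph{equality} $f(\xi_1) = g(\xi_2)$ -- i.e., that $\nu_e(\xi) = 0$ in $T_F \otimes I$ translates into an honest fibered-product element, not merely into two elements with matching images after some further reduction. Once this identification is made, the smoothness hypothesis does all the work.
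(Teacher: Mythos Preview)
Your argument is correct and follows essentially the same route as the paper's own proof: define $\nu_e$ via the torsor difference of $f(\xi_1)$ and $g(\xi_2)$ in the fiber of $F(A')\to F(A)$, then deduce completeness from the smoothness hypothesis. The only cosmetic difference is the sign convention (the paper takes $g(\xi_2)-f(\xi_1)$ rather than $f(\xi_1)-g(\xi_2)$), which is immaterial.
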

\begin{proof} Let $e \colon 0 \to I \to A' \to A \to 0$ be a small extension. 
We should construct a map $$\nu_e^{\Phi} \colon D(A) \times_{(D_1(A) \times D_2(A))} \left(D_1(A') \times D_2(A')\right) \to T_F \otimes I.$$ 
Given an element $$\left(\kappa_A,(\kappa_{A'}^1,\kappa_{A'}^2)\right) \in D(A) \times_{(D_1(A) \times D_2(A))} \left(D_1(A') \times D_2(A')\right),$$ the elements $f_{A'}(\kappa^1_{A'})$ and $g_{A'}(\kappa_{A'}^2)$ lie in the same fiber of $F(A') \to F(A)$. Since $F$ is pro-representable, the fibers are torsors under the action of $T_F \otimes I$.
Thus we can define $$\nu^{\Phi}_e(\left(\kappa_A,(\kappa_{A'}^1,\kappa_{A'}^2)\right))= g_{A'}(\kappa_{A'}^2) - f_{A'}(\kappa^1_{A'}) \in T_F \otimes I.$$ 
This provides $T_F$ with a structure of an obstruction space. 
If the obstruction vanishes, then $(\kappa_{A'}^1,\kappa_{A'}^2) \in D_1(A')\times_{F(A')}D_2(A')$, and by smoothness of the map $D \to D_1\times_F D_2$ we see that $\left(\kappa_A,(\kappa_{A'}^1,\kappa_{A'}^2)\right)$ lies in the image of the map \[ D(A') \to D(A) \times_{(D_1(A) \times D_2(A))} \left(D_1(A') \times D_2(A')\right).\qedhere \]\end{proof}

In the next theorem, we independently verify that the conclusion of Remark \ref{wiarygodnosc} is true for the functors $D=D_G$, $D_1=D_N^{G/N}$ and $D_2=D_{G/N}$. This provides some evidence for plausabilty of existence of the functor $F$.

\begin{thm}\label{szczezl} Assume that the functors $D_N$ and $D_{G/N}$ are pro-representable. Then the vector space $\HO^1(G/N,\der^{\sharp}\!/\der^N)$ is a complete obstruction space to the morphism $\Phi = (\res,\ind) \colon D_G \to D_N^{G/N} \times D_{G/N}.$
\end{thm}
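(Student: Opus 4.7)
The strategy is to promote the tangent-space construction of $\gamma$ from Corollary \ref{architektura} to arbitrary small extensions, extracting a 1-cocycle in $\HO^1(G/N,\der^{\sharp}/\der^N)\otimes I$ from a specific set-theoretic lift $\tilde R\colon G\to\Gamma_{A'}$, and then reversing the construction for completeness.

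For the construction: given a small extension $e\colon 0\to I\to A'\to A\to 0$ and $\xi=(\kappa_A,\lambda_{A'}^1,\lambda_{A'}^2)$ in the fibered product, fix representatives $\rho_A$ of $\kappa_A$, $\rho_{A'}^N$ of $\lambda_{A'}^1$ lifting $\rho_A|_N$, and $\rho_{A'}^{\sharp}$ of $\lambda_{A'}^2$ lifting $\rho_A^{\sharp}$, and set $y_N^{A'}=\prod_{s\in N}\rho_{A'}^N(s)(t)$ so that $A'\psl y_N^{A'}\psr=A'\psl t\psr^{\rho_{A'}^N(N)}$ by Lemma \ref{prozelityzm}. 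The $G/N$-invariance of $\lambda_{A'}^1$ produces for each $g\in G$ a lift $\tilde R_g\in\Gamma_{A'}$ of $\rho_A(g)$ with $\tilde R_g\rho_{A'}^N(n)\tilde R_g^{-1}=\rho_{A'}^N(gng^{-1})$: for any lift $r_g$ of $\rho_A(g)$, the failure-to-normalize 1-cocycle $b\colon N\to\der\otimes I$ is a coboundary $b_n=\alpha^n-\alpha$ (using $G/N$-invariance together with pro-representability of $D_N$), and $\tilde R_g:=m(\alpha)r_g$ works; $\tilde R_g$ is unique modulo the $N$-centralizer $\der^N\otimes I\subseteq\Gamma_{A',A}$. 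Fix coset representatives $T\subseteq G$ with $1\in T$ and extend via $\tilde R_{tn}=\tilde R_t\rho_{A'}^N(n)$; then the 2-cocycle $\eta(g_1,g_2)=\tilde R_{g_1}\tilde R_{g_2}\tilde R_{g_1g_2}^{-1}\in\Gamma_{A',A}$ vanishes on $G\times N$.

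Each $\tilde R_g$ preserves $A'\psl y_N^{A'}\psr$, hence so does $\eta(g_1,g_2)$. Since the submodule of $\der$ preserving $k\psl y_N\psr$ is exactly $\der^N$ (short check: $d(y_N)\in k\psl y_N\psr$ combined with $y_N'(t)\neq 0$ in $k\psl t\psr$ forces $d$ to commute with the $N$-action), $\eta$ takes values in $\der^N\otimes I$. Restricting each $\tilde R_g$ to $A'\psl y_N^{A'}\psr$ and transporting via $A'\psl y\psr\simeq A'\psl y_N^{A'}\psr$ yields a set-theoretic lift $\tilde R^{\sharp}\colon G/N\to\Gamma_{A'}^{\sharp}$ of $\rho_A^{\sharp}$; comparing with the homomorphism $\rho_{A'}^{\sharp}$ gives a 1-cochain $c\colon G/N\to\der^{\sharp}\otimes I$ with $\tilde R^{\sharp}(\bar g)=m^{\sharp}(c_{\bar g})\rho_{A'}^{\sharp}(\bar g)$ and $\delta c=\eta^{\sharp}$, where $\eta^{\sharp}$ is the image of $\eta$ under the embedding $\der^N\hookrightarrow\der^{\sharp}$ of Lemma \ref{rykliwy}. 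Projecting modulo $\der^N$ gives a 1-cocycle $\bar c\colon G/N\to(\der^{\sharp}/\der^N)\otimes I$, and I define $\nu_e^{\Phi}(\xi):=[\bar c]\in\HO^1(G/N,\der^{\sharp}/\der^N)\otimes I$. Independence of all choices (representatives, $\alpha$, and coset reps) and functoriality in $e$ are routine cocycle chases, as each modification alters $c$ by a cochain whose projection to $\der^{\sharp}/\der^N$ is a coboundary.

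For completeness, suppose $[\bar c]=0$ and write $\bar c=\delta\bar\beta$ with $\bar\beta\in(\der^{\sharp}/\der^N)\otimes I$; lift to $\beta\in\der^{\sharp}\otimes I$. Replace the representative $\rho_{A'}^{\sharp}$ of $\lambda_{A'}^2$ by the equivalent conjugate $m^{\sharp}(-\beta)\rho_{A'}^{\sharp}(\cdot)m^{\sharp}(\beta)$; this shifts $c$ by $-\delta\beta$, so after the replacement $c$ takes values in $\der^N\otimes I$. Define $\tilde R_g':=m(-c_{\bar g})\tilde R_g$; since $c_{\bar g}\in\der^N$ commutes with $\rho_{A'}^N(N)$ by Lemma \ref{potwor}(ii), $\tilde R_g'$ still normalizes $\rho_{A'}^N(N)$. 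The new 2-cocycle $\eta'=\eta-\delta c$ lies in $\der^N\otimes I$ and has zero image in $\der^{\sharp}\otimes I$; injectivity of $\der^N\hookrightarrow\der^{\sharp}$ forces $\eta'=0$. Hence $\rho_{A'}(g):=\tilde R_g'$ is a homomorphism $G\to\Gamma_{A'}$ lifting $\rho_A$; it restricts to $\rho_{A'}^N$ on $N$ (since $c_{\bar 1}=0$) and its induced $G/N$-action on $A'\psl y_N^{A'}\psr$ coincides with the modified representative of $\lambda_{A'}^2$, so its class $\kappa_{A'}\in D_G(A')$ maps to $\xi$. The main obstacle is the cocycle bookkeeping for well-definedness, combined with the key identification of $\der^N$ as the submodule of derivations preserving $k\psl y_N\psr$, which is precisely what localizes the obstruction in $\HO^1(G/N,\der^{\sharp}/\der^N)\otimes I$ rather than in a larger cohomology group.
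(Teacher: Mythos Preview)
Your proposal is correct and takes a genuinely different, more direct route than the paper's own proof.

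The paper starts from the obstruction class $\underline{obs}=\nu_e(\kappa_A)\in\HO^2(G,\der)\otimes I$ and uses the Hochschild--Serre spectral sequence (via Lemma~\ref{nieustepliwosc} and the explicit appendix computation of Theorem~\ref{nemezis}) to show this class is inflated from some $\underline{obs}_0\in\HO^2(G/N,\der^N)\otimes I$; it then compares with the $D_{G/N}$-obstruction to place $\underline{obs}_0$ in the image of $\partial$, chooses a preimage $\lambda$, and sets $\underline{Obs}=\lambda+\gamma(\alpha)-\sigma(\beta)$, where $\alpha,\beta$ are torsor differences. A large part of the argument is the verification that $\underline{Obs}$ is independent of the many choices, which invokes Corollary~\ref{prefektura} and the explicit description of $\gamma$ from Corollary~\ref{architektura}. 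Your approach bypasses the spectral sequence entirely: by using pro-representability of $D_N$ to build the normalizing lifts $\tilde R_g$ from the outset, you obtain a $2$-cocycle that is \emph{visibly} inflated from $G/N$ and valued in $\der^N$ (this is exactly what the paper's spectral-sequence step establishes abstractly), and the obstruction then falls out of a single comparison with the chosen representative $\rho_{A'}^{\sharp}$. Your completeness argument is correspondingly cleaner, adjusting $\tilde R$ in one step rather than appealing to Corollary~\ref{Jugendstill}(ii).

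One caution: the ``routine cocycle chases'' for well-definedness are not entirely routine and are where the hypothesis of pro-representability of $D_{G/N}$ is actually used. Two representatives of $\lambda_{A'}^2$ that both lift $\rho_A^{\sharp}$ differ by a $1$-cocycle $e\colon G/N\to\der^{\sharp}\otimes I$, and your $\bar c$ shifts by $\bar e$; you need $[e]=0$ in $\HO^1(G/N,\der^{\sharp})\otimes I$, which follows from freeness of the torsor action on the fibre of $D_{G/N}(A')\to D_{G/N}(A)$, i.e.\ from pro-representability. Likewise, changing $\rho_{A'}^N$ uses pro-representability of $D_N$ again, and changing $\rho_A$ shifts the identification $A'\psl y\psr\simeq A'\psl y_N^{A'}\psr$ and needs a short separate check (the paper isolates this as case~(i)). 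These verifications go through, but they are the substance of the proof and deserve to be written out rather than declared routine.
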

\begin{proof} The proof is quite long, and we will break it in several steps. A part of the proof is isolated in Lemma \ref{nieustepliwosc}, stated and proven only after the main proof. Lemma \ref{nieustepliwosc} needs in turn a certain computation in the Hochshild-Serre spectral sequence, given in Theorem \ref{nemezis} in the appendix.

Let $e \colon 0 \to I \to A' \to A \to 0$  be a small extension in $\kate$. We will construct a map $$\nu_e^{\Phi} \colon D_G(A) \times_{(D_N^{G/N}(A) \times D_{G/N}(A))} \left(D_N^{G/N}(A') \times D_{G/N}(A')\right) \to \HO^1(G/N,\der^{\sharp}\!/\der^N) \otimes I.$$ Choose an element $$\left(\kappa_A,(\kappa_{A'}^N,\kappa_{A'}^{G/N})\right) \in D_G(A) \times_{(D_N^{G/N}(A) \times D_{G/N}(A))} \left(D_N^{G/N}(A') \times D_{G/N}(A')\right).$$ The deformation $\kappa_A\in D_G(A)$ defines an obstruction class $$\underline{obs}=\nu_e(\kappa_A)\in\HO^2(G,\der)\otimes I.$$

\emph{The class $\underline{obs}_0$\textup{:}} By the spectral sequence in group cohomology $$E_2^{pq}=\HO^p(G/N,\HO^q(N,\der)) \Rightarrow \HO^n(G,\der),$$ we know that there exists a filtration $0=E^0 \subseteq E^1 \subseteq E^2 \subseteq E^3=\HO^2(G,\der)$ such that $E^{k+1}/E^k \simeq E_{\infty}^{k,2-k}$. By Lemma \ref{nieustepliwosc} proven below, we see that from the fact that $\res(\kappa_A) \in D_N^{G/N}(A)$ lifts to $A'$ we can conclude that the images of $\underline{obs}$ in $\HO^2(N,\der)\otimes I$ and $\HO^1(G/N,\HO^1(N,\der))\otimes I$ vanish. This shows that the images of $\underline{obs}$ in $(E^3/E^2)\otimes I$ and $(E^2/E^1) \otimes I$ are trivial, thus $\underline{obs} \in E^1 \otimes I$, and hence there exists an $\underline{obs}_0 \in \HO^2(G/N,\der^N) \otimes I$ such that $\underline{obs}=\inf(\underline{obs}_0)$. 

Just like for restriction, we also know that $\ind(\kappa_A)\in D_{G/N}(A)$ lifts to $A'$. We will now show that the obstruction $\nu_e(\ind(\kappa_A))$ to lifting $\ind(\kappa_A) \in D_{G/N}(A)$ to $A'$ is equal to the image of $\underline{obs}_0$ in $\HO^2(G/N,\der^{\sharp}) \otimes I$, and from this conclude that \begin{equation}\label{przedprzedostatni}\underline{obs}_0 \in \ker \left( \HO^2(G/N,\der^N)\to \HO^2(G/N,\der^{\sharp}) \right)\otimes I.\end{equation}

\emph{The element $\ind(\kappa_A) \in D_{G/N}(A)$ is the image of $\underline{obs}_0$\textup{:}} Choose a representative $\rho_A\colon G \to \Gamma_A$ of $\kappa_A$.
Write $\underline{obs}_0$ as the class $\underline{obs}_0=[\eta(\bar{g},\bar{h})]$ of a 2-cocycle $$\eta \colon G/N \times G/N \to \Gamma_{A',A} \simeq \der^{\sharp}\otimes I.$$ We can choose the cocycle in such a way that $\eta(\id,\id)=0$. 
Since $\underline{obs} = \inf(\underline{obs}_0)$, there exists a set-theoretic lift $\rho^{\ast}_{A'} \colon G \to \Gamma_{A'}$ of $\rho_A$ to $A'$ such that $$\rho^{\ast}_{A'}(g)\rho^{\ast}_{A'}(h)=\eta(\bar{g},\bar{h})\rho^{\ast}_{A'}(gh)$$ (The obstruction to lifting $\rho_A$ to $A'$ is the class of $\eta$; we can always modify the choice of $\rho^{*}_{A'}$ so that it induces a given cocycle in this class.) For such a choice of a lift, the restriction of $\rho^{\ast}_{A'}$ to $N$ gives a group homomorphism $N \to \Gamma_{A'}$.
We can thus consider the ring of $N$-invariants $A'\psl t \psr^N = A \psl y_{A'} \psr$ (cf. Lemma \ref{prozelityzm}). Since we have $$\rho^{\ast}_{A'}(s)\rho^{\ast}_{A'}(g)=\rho^{\ast}_{A'}(sg)=\rho^{\ast}_{A'}(g)\rho^{\ast}_{A'}(g^{-1}sg) \quad \text{for} \quad g \in G, s\in N,$$ we see that the morphism $\rho_{A'}^{\ast}$ preserves $A'\psl t \psr^N$ and the morphism it restricts to on $A'\psl t \psr^N$ depends only on the class $\bar{g}$ of $g$ in $G/N$.
We denote the induced map by $\rho^{\ast}_{A'}(\bar{g})^{\sharp}\in\Gamma_{A'}^{\sharp}$.  We see that $\eta(\bar{g},\bar{h})=\rho^{\ast}_{A'}(g)\rho^{\ast}_{A'}(h)\rho^{\ast}_{A'}(gh)^{-1}$ also preserves $A'\psl t \psr^N$. Furthermore, $$\rho^{\ast}_{A'}(\bar{g})^{\sharp}\rho^{\ast}_{A'}(\bar{h})^{\sharp}=\eta(\bar{g},\bar{h})^{\sharp}\rho^{\ast}_{A'}(\overline{gh})^{\sharp},$$ where $\eta(\bar{g},\bar{h})^{\sharp}$ is the restriction of $\eta(\bar{g},\bar{h})$ to  $A'\psl t \psr^N$. Thus, the obstruction class $\nu_e^{G/N}(\ind (\kappa_A))$ to lifting $\ind(\kappa_A)$ to $A'$ is equal the image of $\underline{obs}_0$ in $\HO^2(G/N,\der^{\sharp})\otimes I$. 

\emph{The class $\underline{Obs}$\textup{:}} By (\ref{przedprzedostatni}), we can conclude from Theorem \ref{Mandelsztam} that there exists a class $$\lambda \in \HO^1(G/N,\der^{\sharp}\!/\der^N) \otimes I$$ such that $\underline{obs}_0=\partial\lambda$. Choose a cochain $$u \colon G/N \to \der^{\sharp}\otimes I$$ such that the composite map $$\bar{u} \colon G/N \xrightarrow{u}\der^{\sharp}\otimes I \to (\der^{\sharp}\!/\der^N)\otimes I$$ is a cocycle and $\lambda$ is the class of $\bar{u}$. If necessary, we can modify $u$ in order to get $u(\id)=0$.

Recall that we have chosen a lift $\rho^{\ast}_{A'}$ of $\rho_A$ to $A'$ such that $$\rho^{\ast}_{A'}(g)\rho^{\ast}_{A'}(h)=\eta(\bar{g},\bar{h})\rho^{\ast}_{A'}(gh)$$ and that $\underline{obs}_0=[\eta(\bar{g},\bar{h})]$. We can furthermore assume that $\eta$ has been chosen in such a way that $$\eta(\bar{g},\bar{h})=u(\bar{h})^{\bar{g}}-u(\overline{gh})+u(\bar{g}).$$
Recall that the restriction of $\rho^{\ast}_{A'}$ to $N$ is a group homomorphism, and hence both $[\rho^{\ast}_{A'}]$ and $\kappa^N_{A'}$ are in the fiber of $D_N^{G/N}(A')\to D_N^{G/N}(A)$ lying over $\kappa_A^N$. Since $D_N^{G/N}$ is pro-representable, it makes sense to define $$\alpha = \kappa^N_{A'}-[\rho^{\ast}_{A'}] \in \HO^1(N,\der)^{G/N} \otimes I.$$

The elements $r_{\bar{g}} \in \der^{\sharp} \otimes I$ can be also regarded as elements of $\Gamma_{A'/A}^{\sharp}$. Consider now the maps $r_{\bar{g}}=u(\bar{g})^{-1}\rho^{\ast}_{A'}(\bar{g})^{\sharp} \in \Gamma_{A'}^{\sharp}$. I claim that the map $G/N \to \Gamma_{A'}^{\#}$ given by $\bar{g} \mapsto r_{\bar{g}}$ is a group homomorphism. Indeed, by Lemma \ref{potwor}, we have \begin{eqnarray*}r_{\bar{g}}r_{\bar{h}}&=& u(\bar{g})^{-1}\rho^{\ast}_{A'}(\bar{g})^{\sharp}u(\bar{h})^{-1}\rho^{\ast}_{A'}(\bar{h})^{\sharp}=u(\bar{g})^{-1}\left(u(\bar{h})^{\bar{g}}\right)^{-1}\rho^{\ast}_{A'}(\bar{g})^{\sharp}\rho^{\ast}_{A'}(\bar{h})^{\sharp}\\ &=& u(\bar{g})^{-1}\left(u(\bar{h})^{\bar{g}}\right)^{-1}\eta(\bar{g},\bar{h})^{\sharp}\rho^{\ast}_{A'}(\overline{gh})^{\sharp} =  u(\overline{gh})^{-1}\rho^{\ast}_{A'}(\overline{gh})^{\sharp}=r_{\overline{gh}}. \end{eqnarray*} This shows that both $[r_{\bar{g}}]$ and $\kappa_{A'}^{G/N}$ are in the fiber of $D_{G/N}(A') \to D_{G/N}(A)$ over $\kappa_A^{G/N}$. Define $$\beta = \kappa^{G/N}_{A'}-[r_{\bar{g}}] \in \HO^1(G/N,\der^{\sharp}) \otimes I.$$
We are now ready to define the obstruction map. Put $$\underline{Obs}=\nu_e^{\Phi}\left(\kappa_A,(\kappa_{A'}^N,\kappa_{A'}^{G/N})\right)=\lambda+\gamma(\alpha)-\sigma(\beta) \in \HO^1(G/N,\der^{\sharp}\!/\der^N)\otimes I.$$

We need to verify that the definition of $\underline{Obs}$ does not depend on the choices made, i.e., on the choice of: \begin{enumerate} \item[(i)] the representative $\rho_{A}$ of the class $\kappa_A$, \item[(ii)] the class $\lambda \in \HO^1(G/N,\der^{\sharp}\!/\der^N) \otimes I$ such that $\nu_e(\kappa_A)=\inf(\partial(\lambda))$, \item[(iii)] the map $u\colon G/N \to \der^{\sharp} \otimes I$ such that $u(\id)=0$ and $\lambda=[\bar{u}_{\bar{g}}]$, \item[(iv)] the lift $\rho^{\ast}_{A'}$ of $\rho_A$ to $A'$ such that $\rho^{\ast}_{A'}(g)\rho^{\ast}_{A'}(h)\rho^{\ast}_{A'}(gh)^{-1}=\partial u(\bar{g},\bar{h})$. \end{enumerate} 

\emph{The class $\underline{Obs}$ is independent of the choices made in \textup{(ii)}--\textup{(iv)}\textup{:}} Let $\tilde{\lambda}, \tilde{u}, \tilde{\rho}_{A'}^{\ast}$ be another choices for (ii)--(iv), and denote the respective invariants by $\tilde{\alpha}$ and $\tilde{\beta}$. 
Since $\rho^{\ast}_{A'}$ and $\tilde{\rho}^{\ast}_{A'}$ are both lifts of $\rho_A$, we can write $\tilde{\rho}^{\ast}_{A'}(g) =\gamma_g \rho^{\ast}_{A'}(g)$ with $\gamma_g\in\Gamma_{A',A}$.

Directly from the definition we see that \begin{equation}\label{swiadomosc}\tilde{\lambda}-\lambda=[\overline{\tilde{u}}_{\bar{g}}]-[\overline{u}_{\bar{g}}].\end{equation}
Similarly, we have \begin{equation}\label{marnotrawstwo}\tilde{\alpha}-\alpha=[\rho^{\ast}_{A'}]-[\tilde{\rho}^{\ast}_{A'}]=-[\gamma_s].\end{equation}

We now proceed to compute $\tilde{\beta}-\beta$. The subrings of $A' \psl t \psr$ of $N$-invariants with respect to the two $N$-actions are, respectively, $A'\psl y_{A'} \psr$ and $A'\psl \tilde{y}_{A'}\psr$, where $$y_{A'}=\prod_{s\in N} \rho^{\ast}_{A'}(s)(t), \quad \tilde{y}_{A'}=\prod_{s\in N} \tilde{\rho}^{\ast}_{A'}(s)(t).$$ The elements $y_{A'}$ and $\tilde{y}_{A'}$ have the same image in $A\psl t\psr$, and the map $\tau$ mapping $y_{A'}$ to $\tilde{y}_{A'}$ gives an isomorphism of $A'\psl y_{A'} \psr$ and $A'\psl \tilde{y}_{A'}\psr$ over $A\psl y_A \psr$.
Consider the following diagram:  \begin{displaymath}\xymatrix{ A'\psl y_{A'}\psr \ar[rr]^{\tau} \ar[d]^{\rho_{A'}^{\ast}(\bar{g})^{\sharp}} && A'\psl \tilde{y}_{A'}\psr \ar[d]^{\tilde{\rho}_{A'}^{\ast}(\bar{g})^{\sharp}} \\  A'\psl y_{A'} \psr\ar[r]^{\upsilon_{\bar{g}}} \ar[d]^{u_{\bar{g}}^{-1}} &A'\psl y_{A'} \psr \ar[r]^{\tau} \ar[d]^{\tilde{u}_{\bar{g}}^{-1}} & A'\psl \tilde{y}_{A'}\psr \ar[d]^{\tilde{u}_{\bar{g}}^{-1}}  \\ A'\psl y_{A'} \psr\ar[r]^{\delta_{\bar{g}}}  &A'\psl y_{A'} \psr\ar[r]^{\tau}  & A'\psl \tilde{y}_{A'}\psr }\end{displaymath} in which all the maps are isomorphisms, and $\delta_{\bar{g}}$ and $\upsilon_{\bar{g}}$ are chosen in such a way the the diagram commutes. 
Tensoring the diagram with $A$, we see that $\delta_{\bar{g}} \otimes_{A'}\! A = \upsilon_{\bar{g}} \otimes_{A'}\! A = \id$. 
Thus we can think of $\delta_{\bar{g}}$ and $\upsilon_{\bar{g}}$ as elements of $\Gamma_{A'/A}\cong\der^{\sharp}\otimes I$. 
It is also clear that $$\tilde{\beta}-\beta=[u^{-1}_{\bar{g}}\rho^{\ast}_{A'}(\bar{g})^{\sharp}]- [\tilde{u}^{-1}_{\bar{g}}\tilde{\rho}^{\ast}_{A'}(\bar{g})^{\sharp}]=-[\delta_{\bar{g}}].$$ This gives \begin{equation}\label{wnikliwy}\sigma(\tilde{\beta})-\sigma(\beta)=-[\overline{\delta}_{\bar{g}}]=[\overline{\tilde{u}}_{\bar{g}}]-[\overline{u}_{\bar{g}}]-[\overline{\upsilon}_{\bar{g}}].\end{equation} We need to relate $\delta$ to $\gamma$ and $u$.
Recall $\tilde{\rho}^{\ast}_{A'}(g) =\gamma_g \rho^{\ast}_{A'}(g)$ and write $\gamma_g(t)=t+d_g(t)$ with $d_g\in \der \otimes I$. Recall also that we write $f_g=\rho(g)(t)$. Note first that we have \begin{eqnarray*} \tilde{y}_{A'}&=&\prod_{s\in N}\tilde{\rho}_{A'}^{\ast}(s)(t)=\prod_{s\in N}\gamma_{s}\rho_{A'}^{\ast}(s)(t)=\prod_{s\in N}\rho_{A'}^{\ast}(s)\left(t+d_{s}(f_{s})\right)\\&=&\prod_{s\in N}\rho_{A'}^{\ast}(s)(t) \left(1+\sum_{s\in N}\frac{d_s(f_s)}{f_s}\right)=y_{A'}+y\sum_{s\in N}\frac{d_s(f_s)}{f_s}. \end{eqnarray*} (In the last term, we write $y$ instead of $y_A$ or $\tilde{y}_A$ because the sum lies in $I$, and hence is annihilated by $\mathfrak{m}_{A'}$.) 
We now look at the top rectangle in the previous diagram. 
We have \begin{eqnarray}\nonumber \tilde{\rho}^{\ast}(\bar{g})^{\sharp} \tau(y_{A'}) &=&\tilde{\rho}^{\ast}(\bar{g})^{\sharp}(\tilde{y}_{A'}) = \prod_{s\in N}\tilde{\rho}_{A'}^{\ast}(gs)(t)=\prod_{s\in N}\gamma_{gs}\rho_{A'}^{\ast}(gs)(t) \\ \nonumber &=& \prod_{s\in N}\rho_{A'}^{\ast}(gs)\left(t+d_{gs}(f_{gs})\right) \\ \label{pentatlon} &=& \prod_{s\in N}\rho_{A'}^{\ast}(gs)(t)+y^g\sum_{s\in N}\frac{d_{gs}(f_{gs})}{f_{gs}}. \end{eqnarray} 
Similarly, write $\upsilon_{\bar{g}}(y_{A'})=y_{A'}+w_{\bar{g}}(y)$, $w_{\bar{g}} \in \der^{\sharp} \otimes I$. Then we have $$ \upsilon_{\bar{g}}\rho^{\ast}(\bar{g})^{\sharp}(y_{A'}) = \upsilon_{\bar{g}}\left(\prod_{s\in N}\rho_{A'}^{\ast}(gs)(t)\right)=\prod_{s\in N}\rho_{A'}^{\ast}(gs)(t) + w_{\bar{g}}\left(\prod_{s\in N}f_{gs}\right).$$ Now \begin{eqnarray}\nonumber \tau\upsilon_{\bar{g}}\rho^{\ast}(\bar{g})^{\sharp}(y_{A'}) &=&\tau\left(\prod_{s\in N}\rho_{A'}^{\ast}(gs)(t) + w_{\bar{g}}\left(\prod_{s\in N}f_{gs}\right)\right) \\ \nonumber &=& \prod_{s\in N}\rho_{A'}^{\ast}(gs)(t) + w_{\bar{g}}\left(\prod_{s\in N}f_{gs}\right)+\frac{dy^g}{dy}(\tilde{y}_{A'}-y_{A'})\\ \label{nieelastyczny} &=& \prod_{s\in N}\rho_{A'}^{\ast}(gs)(t) + \frac{dy^g}{dy}w_{\bar{g}}\left(y\right)+\frac{dy^g}{dy}y\sum_{s\in N}\frac{d_s(f_s)}{f_s}.\end{eqnarray} Since $\upsilon$ was chosen in such a way that the diagram commutes, comparing equations (\ref{pentatlon}) and (\ref{nieelastyczny}) we get \begin{equation} \label{sojusznik}w_{\bar{g}}\left(y\right)= y^g\left(\frac{dy^g}{dy}\right)^{-1}\sum_{s\in N}\frac{d_{gs}(f_{gs})}{f_{gs}} - y\sum_{s\in N}\frac{d_s(f_s)}{f_s}.\end{equation} 
Recall that we have \begin{align*} \rho^{\ast}_{A'}(g)\rho^{\ast}_{A'}(h)\rho^{\ast}_{A'}(gh)^{-1}&=\partial u(\bar{g},\bar{h}), \\ \tilde{\rho}^{\ast}_{A'}(g)\tilde{\rho}^{\ast}_{A'}(h)\tilde{\rho}^{\ast}_{A'}(gh)^{-1}&=\partial \tilde{u}(\bar{g},\bar{h}).\end{align*} Since $\tilde{\rho}_{A'}^{\ast}(g)=\gamma_g\rho^{\ast}_{A'}(g)$, Lemma \ref{potwor} gives $$\gamma_g\gamma_h^g\gamma_{gh}^{-1}=\partial \bar{u}(\bar{g},\bar{h}) - \partial u(\bar{g},\bar{h}).$$ In particular $$ \gamma_{gs}=\gamma_g \gamma_s^g \quad \text{for}\quad g\in G, s\in N,$$  and hence $$ d_{gs}=d_s^g+d_g \quad \text{for}\quad g\in G, s\in N.$$
This allows to use Proposition \ref{prefektura}. Together with equation (\ref{sojusznik}) it shows that there exists an $\alpha \in \der_1$ such that $$\alpha^g-\alpha=d_g(y_G)-\frac{dy_G}{dy}w_{\bar{g}}(y)=d_g(y_G)-w_{\bar{g}}(y_G),$$ where $y_G = \prod_{g\in G}f_g(t)$. In particular $$\alpha^s-\alpha=d_s(y_G) \quad \text{for}\quad s \in N.$$ Corollary \ref{architektura} gives then $\gamma([d_s(y_G)])=[\overline{w}_{\bar{g}}(y_G)]$, or in other notation $$\gamma([\gamma_s])=[\overline{\upsilon}_{\bar{g}}].$$ Combining (\ref{swiadomosc}), (\ref{marnotrawstwo}) and (\ref{wnikliwy}), this gives $$\tilde{\lambda}+\gamma(\tilde{\alpha})-\sigma(\tilde{\beta})= \lambda+\gamma(\alpha)-\sigma(\beta).$$ and hence the definition of $\underline{Obs}$ is independent of the choices for (ii)-(iv).

\emph{The class $\underline{Obs}$ is independent of the choice for \textup{(i)}\textup{:}} To prove that $\underline{Obs}$ is independent on the choice made in (i), choose a different representative $\tilde{\rho}_A$ of $\kappa_A$ and write $\tilde{\rho}(g)=\chi\rho_A(g)\chi^{-1}$ with $\chi \in \Gamma_{A,k}$. Choose $\chi'\in\Gamma_{A',k}$ lying over $\chi$. Now make the following choices: $$ \text{in (ii) put } \tilde{\lambda}=\lambda, \quad \text{in (iii) put } \tilde{u}=u, \quad \text{in (iv) put } \tilde{\rho}_{A'}^{\ast}(g)=\chi'\rho_{A'}^{\ast}(g)\chi'^{-1}.$$  It is then clear that $$\tilde{\lambda}=\lambda,\quad \tilde{\alpha}=\alpha,\quad \tilde{\beta}=\beta.$$ This finishes the proof that $\underline{Obs}$ is well-defined. 

\emph{Obstruction space\textup{:}} The fact that $\underline{Obs}$ depends functorially on the extension $$e\colon 0\to I \to A' \to A\to 0$$ is obvious from the construction. This proves that $\HO^2(G/N, \der^{\sharp}\!/\der^N)$ is an obstruction space to $\Phi$.

\emph{Completeness of the obstruction space\textup{:}} To prove that $\HO^2(G/N, \der^{\sharp}\!/\der^N)$ is complete, assume that $\underline{Obs}=0$. Then $$\lambda \in \im \gamma + \im \sigma$$ and hence by Theorem \ref{Mandelsztam}, $$\underline{obs}=\inf(\partial(\lambda))=0.$$ Hence there is no obstruction to lifting $\rho_A$ to $A'$ and there exists a group homomorphism $\rho_{A'} \colon G \to \Gamma_{A'}$, which is a lift of $\rho_A$ to $A'$. Since $\underline{Obs}$ does not depend on the choices made in (i)--(iv), we can make the following choices: $$ \text{in (ii) put } \lambda=0, \quad \text{in (iii) put } u=0, \quad \text{in (iv) put } \rho_{A'}^{\ast}=\rho_{A'}. $$ Consider $$\kappa_{A'}^0=[\rho_{A'}]\in D_G(A').$$ Then $\kappa_{A'}^0$ is a lift of $\kappa_A$ to $A'$ and $$\alpha=\kappa_{A'}^N-\res(\kappa_{A'}^0), \quad \beta=\kappa_{A'}^{G/N}-\ind(\kappa_{A'}^0).$$ Since $\gamma(\alpha)=\sigma(\beta)$, by Corollary \ref{Jugendstill}.ii there exists a $\zeta\in\HO^1(G,\der)$ such that $$\alpha=\res(\zeta), \quad \beta=\ind(\zeta).$$ Put $\kappa_{A'}=\kappa_{A'}^0+\zeta \in D_G(A')$. Then$$\res(\kappa_{A'})=\res(\kappa_{A'}^0)+\res(\zeta)=\kappa_{A'}^N, \quad \ind(\kappa_{A'})=\ind(\kappa_{A'}^0)+\ind(\zeta)=\kappa_{A'}^{G/N}.$$ Hence the image of $\kappa_{A'}$ by the map $$D_G(A')\to D_G(A)\times\left(D_N^{G/N}(A')\times D_{G/N}(A')\right)$$ is $\left(\kappa_A,\left(\kappa_{A'}^N,\kappa_{A'}^{G/N}\right)\right)$. This proves that the obstruction space is complete.\end{proof}

\begin{remark} Note that we can recover Theorem \ref{pyszczek} as a special case of Theorem \ref{szczezl}. In fact, when the order of $G/N$ is prime to $p$, the tangent and obstruction space to $D_{G/N}$ vanishes, and hence $D_{G/N}=\{\ast\}$ is a singleton functor. Similarly, the obstruction space to the map $\res \colon D_G \to D_N^{G/N}$ vanishes, and hence $\res$ is smooth. From this fact, we conclude as in the proof of Theorem \ref{pyszczek}. \end{remark}

\begin{lemma}\label{nieustepliwosc} Let $e \colon 0\to I \to A' \to A \to 0$ be a small extension in $\kate$ and let $\kappa \in D_G(A)$. Assume $D_N$ is pro-representable. Let $$\nu_e(\kappa) \in \HO^2(G,\der) \otimes I$$ be an obstruction to lifting $\kappa$ to $A'$. Then the image of $\kappa$ in $D_N^{G/N}(A)$ lifts to $D_N^{G/N}(A')$ if and only if the image of $\nu_e(\kappa)$ vanishes in $\HO^2(N,\der) \otimes I$ and in $\HO^1(G/N,\HO^1(N,\der))\otimes I$.
\end{lemma}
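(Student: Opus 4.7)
The plan is to decompose the condition that $\res(\kappa)$ lifts to $D_N^{G/N}(A')$ into its two natural stages --- first the existence of some lift $\lambda$ in $D_N(A')$, then the possibility of choosing $\lambda$ to be $G/N$-invariant --- and to identify the obstruction at each stage with one of the two images named in the statement.

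The first stage is handled by functoriality. The obstruction to lifting $\res(\kappa) \in D_N(A)$ to $D_N(A')$ is the image $\res(\nu_e(\kappa)) \in \HO^2(N,\der) \otimes I$ of the obstruction class of $\kappa$ under restriction. Thus the vanishing of this first image is equivalent to the existence of any lift $\lambda \in D_N(A')$, which is clearly necessary in both directions of the iff.

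For the second stage, assume such a $\lambda$ exists. Pro-representability of $D_N$ turns the fiber of $D_N(A') \to D_N(A)$ over $\res(\kappa)$ into a torsor under $\HO^1(N,\der) \otimes I$, so the assignment $c(\bar g) = g_*\lambda - \lambda$ gives a well-defined map $c \colon G/N \to \HO^1(N,\der) \otimes I$. Using $(gh)_* = g_*h_*$ and equivariance of the torsor action, $c$ is a 1-cocycle; and replacing $\lambda$ by $\lambda + [\xi]$ alters $c$ by the coboundary $\bar g \mapsto g_*[\xi] - [\xi]$. Hence the class $[c] \in \HO^1(G/N, \HO^1(N,\der)) \otimes I$ depends only on $\res(\kappa)$, and its vanishing is precisely the condition that some translate of $\lambda$ is $G/N$-invariant.

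It remains to identify $[c]$ with the image of $\nu_e(\kappa)$ under the edge morphism $\ker(\HO^2(G,\der) \to \HO^2(N,\der)) \to \HO^1(G/N,\HO^1(N,\der))$ of the Hochschild--Serre spectral sequence. To do so, pick a representative $\rho_A$ of $\kappa$, a set-theoretic lift $\rho^*_{A'} \colon G \to \Gamma_{A'}$, and let $\eta(g,h) = \rho^*_{A'}(g)\rho^*_{A'}(h)\rho^*_{A'}(gh)^{-1}$ be the resulting 2-cocycle representing $\nu_e(\kappa)$. Since $\res(\nu_e(\kappa)) = 0$, we may modify $\rho^*_{A'}$ so that $\eta|_{N \times N} = 0$; then $\rho^*_{A'}|_N$ is a homomorphism and we take $\lambda = [\rho^*_{A'}|_N]$. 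A short calculation starting from $g_*\rho^*_{A'}|_N(s) = \rho^*_{A'}(g)\rho^*_{A'}(g^{-1}sg)\rho^*_{A'}(g)^{-1}$ and applying the defining relation for $\eta$ twice produces the explicit 1-cocycle representative
\[
c(\bar g)(s) = \eta(g, g^{-1}sg) - \eta(s, g), \qquad g \in G,\ s \in N,
\]
which coincides with the standard representative of $[\eta]$ under the edge map. The principal obstacle is this spectral-sequence identification together with reconciling the conventions for the action of $G/N$ on $D_N$ (Proposition \ref{action}) and on $\HO^1(N,\der)$ (Proposition \ref{tanres}); the abstract part of this check is deferred to Theorem \ref{nemezis} in the appendix. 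Once the identification is in place, both implications of the lemma follow immediately from the two-stage analysis.
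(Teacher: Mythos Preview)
Your proof is correct and follows essentially the same approach as the paper: both reduce the first stage to the vanishing of $\res(\nu_e(\kappa))$ in $\HO^2(N,\der)\otimes I$, then use pro-representability of $D_N$ to turn the fiber into a torsor, compute $g_*\lambda-\lambda$ via the explicit cocycle $s\mapsto \eta(g,g^{-1}sg)-\eta(s,g)$, and defer the identification with the Hochschild--Serre edge map to Theorem~\ref{nemezis}. Your write-up makes the cocycle verification for $c$ and the independence of $[c]$ from the choice of $\lambda$ slightly more explicit than the paper does, but the argument is the same.
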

\begin{proof}
Choose a representative $\rho_A\colon G \to \Gamma_A$ of $\kappa\in D_G(A)$. The image of $\nu_e(\kappa)$ in $\HO^2(N,\der)\otimes I$ vanishes if and only if $\res(\kappa)\in D_N(A)$ admits a lift to $D_N(A')$. We can thus assume that this condition is satisfied and we can choose a lift $\rho_{A'}\colon N \to \Gamma_{A'}$ and its set-theoretic extension to a map $\rho^{\ast}_{A'}\colon G \to \Gamma_{A'}$ which lifts $\rho_A$. The deformation $\kappa'=[\rho_{A'}]\in D_N(A')$ is a lift of $\res(\kappa)$ to $A'$. The obstruction $\nu_e(\kappa)$ is the class of the 2-cocycle $\eta$ given by $$\rho_{A'}^{\ast}(g)\rho_{A'}^{\ast}(h)=\eta(g,h)\rho_{A'}^{\ast}(gh).$$

 Note that $\eta(s,t)=0$ for $s,t\in N$. We can then compute $g_{\ast}\kappa'$ as the class of $$g_{\ast}\rho_{A'}=\rho_{A'}^{\ast}(g)\rho_{A'}^{\ast}(g^{-1}sg)\rho_{A'}^{\ast}(g)^{-1}=\eta(g,g^{-1}sg)\eta(s,g)^{-1}\rho_{A'}^{\ast}(s).$$ This shows that \begin{equation}\label{grzyby} g_{\ast}\kappa'=\kappa'+[\eta(g,g^{-1}sg)-\eta(s,g)].\end{equation} Since $D_N$ is pro-representable, the fibers of $D_N(A') \to D_N(A)$ over $\kappa$ are torsors under the action of $T_{D_N}\cong\HO^1(N,\der)\otimes I$. Then there is a lift of $\res(\kappa)$ to $D_N^{G/N}(A')$ if and only if there exists a $\zeta \in T_{D_N}$ such that $$g_{\ast}(\kappa'-\zeta)=\kappa'-\zeta.$$ Together with (\ref{grzyby}) this gives $$\eta(g,g^{-1}sg)-\eta(s,g)=\zeta^g-\zeta.$$ The claim follows from Theorem \ref{nemezis}, which is phrased in a more general context in the next subsection.\end{proof}

\end{subsection}
\end{section}

\begin{section}{Appendix: A map in the Hochschild-Serre spectral sequence}

The purpose of this appendix is to provide an explicit formula for a certain map in the Hochschild-Serre spectral sequence in group cohomology. 
For lack of a reference, we present this computation here.

Recall that the Hochschild-Serre spectral sequence arises as the Grothendieck spectral sequence (cf. \cite[Theorem 5.8.3]{Weibel}) associated to the composition of functors of $N$- and $G/N$-invariants $$\Mod_G \xrightarrow{\mathrm{inv}_N} \Mod_{G/N} \xrightarrow{\mathrm{inv}_{G/N}} \mathrm{Ab}.$$ The spectral sequence takes on the form $$\HO^p(G/N,\HO^q(N,M)) \Rightarrow \HO^{p+q}(G,M).$$ For a general reference for spectral sequences, we use \cite[Appendix 3]{Eisenbud}.

\begin{thm}\label{nemezis} The map $$\ker \big( \HO^2(G,M) \to \HO^2(N,M)\big)\to \HO^1(G/N,\HO^1(N,M)) $$ induced from the Hochschild-Serre spectral sequence is given by $$[\eta(g,h)] \mapsto \xi_{\bar{g}}(s) = \eta(s, g) - \eta(g,g^{-1}sg) + gf(g^{-1}sg)- f(s),$$ where $f \colon N \to M$ is a cochain such that $$\eta(s,t)=(\partial f)(s,t) = sf(t)-f(st)+f(s), \quad s,t \in N.$$\end{thm}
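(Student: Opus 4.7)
\emph{Plan.} The strategy is a cochain chase through the Hochschild-Serre spectral sequence. My first step would be to reduce to the cleaner case $\eta|_{N\times N}=0$. Pick any set-theoretic extension $\tilde f\colon G \to M$ of $f$, and set $\eta' := \eta - \partial\tilde f$; it is cohomologous to $\eta$ and restricts to zero on $N\times N$. A direct expansion (using $\tilde f(s)=f(s)$ and $\tilde f(g\cdot g^{-1}sg)=\tilde f(sg)$) yields
\begin{equation*}
\eta'(s,g) - \eta'(g, g^{-1}sg) = \bigl[\eta(s,g) - \eta(g, g^{-1}sg) + g f(g^{-1}sg) - f(s)\bigr] + (1-s)\tilde f(g),
\end{equation*}
and the correction $(1-s)\tilde f(g)$ is a $1$-coboundary in $s$ with $g$ fixed. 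Modulo $1$-coboundaries on $N$, the expression in the theorem statement therefore equals $\xi'_g(s) := \eta'(s,g) - \eta'(g, g^{-1}sg)$, so it suffices to prove the theorem with $\eta$ replaced by $\eta'$ and $\xi_{\bar g}$ replaced by $\xi'_g$.

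Next I would check directly from the $2$-cocycle identity for $\eta'$ (making essential use of $\eta'|_{N\times N}=0$) that $s \mapsto \xi'_g(s)$ is a $1$-cocycle on $N$ for each $g$---the relevant instances of the cocycle identity are at the triples $(s,t,g)$, $(s,g,g^{-1}tg)$, and $(g,g^{-1}sg,g^{-1}tg)$---that its class $[\xi'_g] \in \HO^1(N,M)$ is independent of the representative $g$ of the coset $\bar g$, and that the map $\bar g \mapsto [\xi'_g]$ is a $1$-cocycle on $G/N$ with values in $\HO^1(N,M)$. These are routine but slightly lengthy manipulations in which the triples involving $(g,h,s)$ and $(s,g,h)$ provide the $G/N$-cocycle condition after one mods out by $N$-coboundaries.

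Finally, to identify the resulting class in $\HO^1(G/N, \HO^1(N,M))$ with the spectral sequence map itself, I would realize the Hochschild-Serre spectral sequence via a double complex $K^{p,q} = C^p(G/N, C^q(N,M))$ whose total complex, after a choice of set-theoretic section $\sigma \colon G/N \to G$, admits a quasi-isomorphism to the bar complex $C^\bullet(G, M)$. Under this identification, a $2$-cocycle $\eta'$ with $\eta'|_{N\times N}=0$ pulls back to a total $2$-cocycle whose $K^{1,1}$-component is precisely $\bar g \mapsto \xi'_g$, and this component represents the image of $[\eta']$ in $E_2^{1,1} = \HO^1(G/N,\HO^1(N,M))$. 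The main obstacle is this last step: translating between the bar complex of $G$ and the double complex requires a non-canonical choice of section and careful bookkeeping of signs, whereas the cocycle manipulations in the previous paragraph are routine.
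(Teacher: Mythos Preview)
Your plan is correct and, at the level of organization, cleaner than the paper's. Both arguments ultimately pass through the double complex description of the Hochschild--Serre spectral sequence and extract the $E_2^{1,1}$-component of a total $2$-cocycle representing $[\eta]$. The difference is that the paper works with the raw cocycle $\eta$ throughout: it realizes $\eta$ inside $E_0^{0,2}=\Hom_G(E_0(G/N),\Hom_N(E_2(G),M))$, writes down by hand an element $b\in E_0^{0,1}$ with $d_{\mathrm{vert}}b=\eta$ (this is where the cochain $f$ with $\eta|_{N\times N}=\partial f$ enters), then computes $c=d_{\mathrm{hor}}b$ and simplifies the resulting expression by repeatedly applying the $2$-cocycle identity at five carefully chosen triples. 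Your reduction to the case $\eta'|_{N\times N}=0$ absorbs the cochain $f$ at the outset and shortens this last phase considerably: once $\eta'|_{N\times N}=0$, the element $b$ can be taken much simpler and the formula $\xi'_g(s)=\eta'(s,g)-\eta'(g,g^{-1}sg)$ falls out with far less bookkeeping. Your verification that the reduction only changes $\xi_g$ by the $N$-coboundary $(1-s)\tilde f(g)$ is correct, as are the triples you list for the $N$-cocycle check.

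The one place where your outline is thinner than the paper is precisely the step you flag as the main obstacle: identifying $\bar g\mapsto[\xi'_g]$ with the spectral-sequence map, i.e.\ exhibiting an explicit total $2$-cocycle in your double complex $K^{p,q}=C^p(G/N,C^q(N,M))$ whose $(1,1)$-component is $\xi'$. The paper does this step in full detail (with its slightly different double complex $\Hom_G(E_p(G/N),\Hom_N(E_q(G),M))$, which postpones the choice of section), and most of its length is spent there. With your normalization $\eta'|_{N\times N}=0$ this step is genuinely easier, but you should still write down the comparison map from the bar complex of $G$ to the total complex explicitly enough to read off the $(1,1)$-component; otherwise the argument rests on an assertion at exactly the point where the content lies.
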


\begin{proof}

Recall that group cohomology can be computed as the cohomology of the complex $\Hom_G(E_{\bullet},M)$, where $E_i$ is the free abelian group with the basis given by $(i+1)$-tuples $(g_0,\ldots,g_i)$ of elements of $G$. With the $G$-action given by $$g(g_0,\ldots,g_i)=(gg_0,\ldots,gg_i)$$ the module $E_i$ becomes a free $\mathbf{Z}[G]$-module. The derivation $d_i \colon E_i \to E_{i-1}$ is given by $$d_i (g_0,\ldots,g_i) = \sum_{j=0}^i (-1)^j (g_0,\ldots,\check{g}_j,\ldots,g_i).$$ 

We will change the notation slightly, replacing the complex $E_{\bullet}$ by an isomorphic complex $F_{\bullet}$ with less apparent symmetry. Denote by $F_i$ a free $\mathbf{Z}[G]$-module with basis given by $i$-tuples $[g_1,\ldots,g_i]$ and define the derivation $d_i \colon F_i \to F_{i-1}$ by $$d_i [g_1,\ldots,g_i] = g_1 [g_2,\ldots,g_i] + \sum_{j=1}^{i-1} (-1)^j [g_0,\ldots,g_jg_{j+1},\ldots,g_i] + (-1)^i [g_1,\ldots,g_{i-1}].$$ This is an isomorphic complex and the isomorphism is defined on the basis by $$[g_1,\ldots,g_i] \mapsto (1,g_1,g_1g_2,\ldots,g_1g_2\cdots g_i).$$ We shall call $E_{\bullet}$ and $F_{\bullet}$ the homogenous and the inhomogenous standard complexes, respectively. When it seems important to denote the group explicitly, we write $E_{\bullet}(G)$ and $F_{\bullet}(G)$.

Define $$E_0^{p,q} = \Hom_G(E_p(G/N),\Hom_N(E_q(G),M)).$$ This defines a double complex, with total complex $T$. Considering its vertical filtration, we get a spectral sequence $$_{\mathrm{vert}}E_2^{p,q}=\HO^p(G/N,\HO^q(N,M)) \Rightarrow \HO^{p+q}(T).$$ Considering its horizontal filtration, we also get a spectral sequence $$_{\mathrm{hor}}E_2^{p,q}=\HO^p(G/N,\HO^q(N,M)) \Rightarrow \HO^{p+q}(T).$$ Furthermore, the horizontal filtration spectral sequence satisfies $$_{\mathrm{hor}}E_2^{p,q}=\HO^q(G,M) \text{ for } p=0, \quad _{\mathrm{hor}}E_2^{p,q}=0 \text{ for } p \geq 1.$$ This shows that the spectral sequence degenerates on the level of $_{\mathrm{hor}}E_2^{p,q}$ and we get isomorphisms $\HO^q(G,M) \cong \HO^q(T)$. In this way we get a spectral sequence $$\mathrm{H}^p(G/N,\mathrm{H}^q(N,M)) \Rightarrow \mathrm{H}^{p+q}(G,M),$$ which is precisely the Hochschild-Serre spectral sequence. 

An element $a \in \HO^2(G,M)$ can be written as an element of $\Hom_G(E_2,M)$, namely as a 2-cocycle $\eta(g,h,k)$ with values in $M$. We easily see that the same element of $\HO^2(G,M)$ is induced by an element of $$E_0^{0,2}=\Hom_G(E_0(G/N),\Hom_N(E_2(G),M))$$ given by the map $$\bar{\gamma} \mapsto \big((g,h,k) \mapsto \eta(g,h,k)\big).$$ If $a$ lies in $\ker(\HO^2(G,M) \to \HO^2(N,M))$, then there exists a 1-cocycle $f(g,h) \in \Hom_N(E_1(N),M)$ such that $$\eta(g,h,k)=\partial f(g,h,k) = f(h,k)-f(g,k)+f(g,h).$$ Choose a section $s\colon G/N \to G$ and put $\sigma(g)=gs(\bar{g})^{-1}$. We can assume that $s(\id)=\id$ and hence $\sigma(\id)=\id$. Then $\sigma \colon G \to N$ is an $N$-equivariant map. Let $b$ be the element of $E_0^{0,1}$ given by $$\bar{\gamma} \mapsto \big((g,h) \mapsto (\gamma \xi)(g,h) = \gamma \xi(\gamma^{-1}g,\gamma^{-1}h)\big)$$ where $$\xi(g,h) = \eta(\sigma(g),g,h) + \eta(\sigma(h),\sigma(g),h) - f(\sigma(h),\sigma(g)).$$
One easily checks directly that $d_{\mathrm{vert}}b = a$. Further, $c = d_{\mathrm{hor}}b \in$ $E_0^{1,1}$ is given by $$(\bar{\gamma},\bar{\delta}) \mapsto \big( (g,h) \mapsto \zeta_{(\gamma,\delta)}(g,h) = (\delta\xi)(g,h)-(\gamma\xi)(g,h)\big).$$

We are now ready to compute the desired map. The element $a$ induces a class in $\HO^2(G,M)=E_2^{0,2}$. It also induces a class in the cohomology of the total complex $\HO^2(T)$.
This class is the same as the class of $$a-d_{\mathrm{tot}}b=a-d_{\mathrm{vert}}b+d_{\mathrm{hor}}b=c \in E_0^{1,1}.$$ Further, the class of $c$ in $$E_2^{1,1} = \HO^1(G/N,\HO^1(N,M))$$ is given as the class of the map in $$\Hom_G(E_1(G/N),\Hom_N(E_1(N),M))$$ given by $$(\bar{\gamma},\bar{\delta}) \mapsto \big( (g,h) \mapsto \zeta_{(\gamma,\delta)}(g,h) = (\gamma\xi)(g,h)-(\delta\xi)(g,h)\big).$$ (Note the restriction from $G$ to $N$.) 
Changing the notation from the complex $E_{\bullet}$ to the (isomorphic) inhomogenous complex $F_{\bullet}$ we get the map $$\bar{\delta} \mapsto \big( h \mapsto \zeta_{\delta}(h) \big),$$ with \begin{eqnarray*} \zeta_{\delta}(h) &=& \delta \eta(\sigma(\delta^{-1}),\delta^{-1},\delta^{-1}h) + \delta\eta(\sigma(\delta^{-1}h),\sigma(\delta^{-1}),\delta^{-1}h) - \delta f(\sigma(\delta^{-1}h),\sigma(\delta^{-1}))  \\ & & - \eta(\sigma(\id),\id,h) - \eta(\sigma(h),\sigma(\id),h) + f(\sigma(h),\sigma(\id)).\end{eqnarray*}
We preserve the notation $\eta(g,h)$ and $f(h)$ also for the associated inhomogenous cochains. 
In this notation, we get \begin{align*} \zeta_{\delta}(h) =& \delta\sigma(\delta^{-1}) \eta(\sigma(\delta^{-1})^{-1}\delta^{-1},h) + \delta\sigma(\delta^{-1}h)\eta(\sigma(\delta^{-1}h)^{-1}\sigma(\delta^{-1}),\sigma(\delta^{-1})^{-1}\delta^{-1}h) \\ &- \delta \sigma(\delta^{-1}h) f((\sigma(\delta^{-1}h))^{-1}\sigma(\delta^{-1})) -  \eta(\id,h) - h\eta(h^{-1},h) + hf(h^{-1}).\end{align*}
The map does not depend on the choice of a representative of $\delta$. Thus we can assume that $\sigma(\delta^{-1})=\id$. It follows that $\sigma(\delta^{-1}h)=\delta^{-1}h\delta$ and the map takes the form \begin{eqnarray}\label{redaktor} \zeta_{\delta}(h) &=& \delta \eta(\delta^{-1},h) +h \delta\eta(\delta^{-1}h^{-1}\delta,\delta^{-1}h) - h \delta f(\delta^{-1}h^{-1}\delta) \\ \nonumber & &-  \eta(\id,h) - h\eta(h^{-1},h) + h f(h^{-1}).\end{eqnarray} Applying the 2-cocycle equation $$g\eta(h,k)-\eta(gh,k)+\eta(g,hk)-\eta(g,h)=0, \quad g,h,k\in G$$ to the triples $(g,h,k)$ equal to $$(\delta,\delta^{-1},h), (h\delta,\delta^{-1}h^{-1}\delta,\delta^{-1}h), (\delta,\delta^{-1}h\delta,\delta^{-1}h^{-1}\delta), (h,\delta,\delta^{-1}), (\delta,\id,\id),$$ we get the following equations: \begin{align} \label{kolchoz1} &\delta\eta(\delta^{-1},h)-\eta(\id,h)+\eta(\delta,\delta^{-1}h)-\eta(\delta,\delta^{-1}) =0 \\\label{kolchoz2} &h\delta\eta(\delta^{-1}h^{-1}\delta,\delta^{-1}h)-\eta(\delta,\delta^{-1}h)+\eta(h\delta,\delta^{-1})-\eta(h\delta,\delta^{-1}h^{-1}\delta)=0 \\ \label{kolchoz3}&\delta\eta(\delta^{-1}h\delta,\delta^{-1}h^{-1}\delta)-\eta(h\delta,\delta^{-1}h^{-1}\delta)+\eta(\delta,\id)-\eta(\delta,\delta^{-1}h\delta)=0 \\\label{kolchoz4}&h\eta(\delta,\delta^{-1})-\eta(h\delta,\delta^{-1})+\eta(h,\id)-\eta(h,\delta)=0 \\ \label{kolchoz5}&\delta\eta(\id,\id)-\eta(\delta,\id)=0{}.\end{align} Adding equations (\ref{kolchoz1}), (\ref{kolchoz2}) and (\ref{kolchoz4}) and subtracting equations (\ref{kolchoz3}) and (\ref{kolchoz5}) we get \begin{align*}\delta\eta(\delta^{-1},h)+h\delta\eta(\delta^{-1}h\delta,&\delta^{-1}h)=\delta\eta(\delta^{-1}h\delta,\delta^{-1}h^{-1}\delta)-\eta(\delta,\delta^{-1}h\delta)+\delta\eta(\id,\id) \\ &+\eta(\id,h)+\eta(\delta,\delta^{-1})-h\eta(\delta,\delta^{-1})-\eta(h,\id)+\eta(h,\delta).\end{align*} Together with equation (\ref{redaktor}), this gives $$ \zeta_{\delta}(h) = \eta(h, \delta) - \eta(\delta,\delta^{-1}h\delta) +\eta(\delta,\delta^{-1}) - h \eta(\delta,\delta^{-1}) +\delta f(\delta^{-1}h\delta)- f(h).$$ Passing to cohomology, one sees that this gives precisely the map stated in the claim.
\end{proof}

\end{section}

\end{document}